\newtheorem{theorem}{Theorem}[section]
\newtheorem{lemma}[theorem]{Lemma}
\newtheorem{corollary}[theorem]{\rm\bfseries Corollary}
\begin{document}

\title{ Unicyclic signed graphs with maximal energy
\thanks{
}}
\author{Dijian Wang, Yaoping Hou \thanks{Corresponding author: yphou@hunnu.edu.cn}\\
{\footnotesize Key Laboratory of High Performance Computing and Stochastic Information} \\
{\footnotesize Processing (HPCSIP) (Ministry of Education of China)} \\
{\footnotesize College of Mathematics and Statistics, Hunan Normal University} \\
{\footnotesize Changsha, Hunan 410081, P. R. China.} \\
}

\date{}
\maketitle
\begin{abstract}
Let $x_1, x_2, \dots, x_n$ be the eigenvalues of a signed graph $\Gamma$ of order $n$.
The energy of $\Gamma$ is defined as $E(\Gamma)=\sum^{n}_{j=1}|x_j|.$ Let $\mathcal{P}_n^4$ be obtained by connecting a vertex of
the negative circle $(C_4,{\overline{\sigma}})$ with a terminal vertex of the path $P_{n-4}$. In this paper, we show that for $n=4,6$ and $n \geq 8,$ $\mathcal{P}_n^4$ has the maximal energy
among all connected unicyclic $n$-vertex signed graphs, except the cycles $C_5^+, C_7^+.$ \\
\noindent
\textbf{AMS classification}:05C50\\
{\bf Keywords}: energy of signed graphs, unicyclic signed graphs, maximal.
\end{abstract}

\baselineskip=0.30in

\section {Introduction}
In this paper, all graphs are simple (loopless and without
multiple edges). The vertex set and edge set of the graph $G$ will be denoted by
$V(G)$ and $E(G)$, respectively. A \emph{signed graph}  $\Gamma = (G, \sigma)$ (or $\Gamma =G^\sigma$)
consists of an unsigned graph $G = (V, E)$ and a sign function $\sigma: E(G)\rightarrow  \{+,-\}$, and
$G$ is its underlying graph, while $\sigma$ is its sign
function (or signature).  An edge $e$ is \emph{positive} (\emph{negative}) if $\sigma(e) = +$ (resp. $\sigma(e) = -$).
If all edges in $\Gamma$ are \emph{positive} (\emph{negative}), then $\Gamma$ is denoted by $(G,+)$ (resp. $(G,-)$).

Actually, each concept defined for the underlying graph can be transferred with  the signed graph.
For example, the degree of a vertex $v$ in $G$ is also its degree in $\Gamma$.
Furthermore, if some subgraph of the underlying graph is observed, then the sign function for the signed subgraph is the
restriction of the previous one. Thus, if $v \in V(G)$, then $\Gamma -v$ denotes the signed subgraph
having $G - v$ as the underlying graph, while its signature is the restriction from $E(G)$ to
$E(G - v)$ (note, all edges incident to $v$ are deleted). If $U \subset V(G)$ then $\Gamma[U]$ or $G(U)$
denotes the (signed) induced subgraph arising from $U$, while $\Gamma- U = \Gamma[V(G)\backslash U]$.
Sometimes we also write $\Gamma-\Gamma[U]$ instead of $\Gamma- U$.
 Let $C$ be a cycle in $\Gamma$, the sign of $C$ is given by $\sigma(C) =\prod_{e\in C}\sigma(e).$  A cycle whose sign is + (resp. $-$) is called \emph{positive} (resp. \emph{negative}), denotes by $C^+$ (resp. $C^{\overline{\sigma}}$). Alternatively, we can say that a cycle is positive if it contains an even number of negative edges. A signed graph is \emph{balanced} if all cycles are positive; otherwise it is \emph{unbalanced}. \emph{Switching} $\Gamma$
 means reversing the signs of all edges between a vertex subset $U$ and its complement. $U$ may be empty. The switched signed graphs is written $\Gamma^U$, and we call
$\Gamma^U$ and $\Gamma$ are \emph{switching equivalent}. Switching equivalence leaves the many  graphic
invariants, such as the set of positive cycles. In fact, the signature on bridges is not relevant,
 hence we will always
consider the all positive signature for trees. In the sequel signed trees and unsigned trees
will be considered as the same object. For the same reason, the edges which do not lie on
any cycle are not relevant for the signature and they will be always considered as positive.

The adjacency matrix of a signed graphs $\Gamma$ whose vertices are $v_1, v_2,\dots , v_n$ is the $n \times n$ matrix $A(\Gamma)=(a_{ij}),$ where
$$ a_{ij}=\left\{
\begin{array}{rcl}
\sigma(v_iv_j),    &      & \text{if there is an edge between $v_i$ and $v_j$, }\\
0    , &      & \text{otherwise.}\\
\end{array} \right.$$

Clearly, $A(\Gamma)$ is real symmetric and so all its eigenvalues are real. The characteristic polynomial $|xI-A(\Gamma)|$ of the adjacency
matrix $A(\Gamma)$ of a signed graph $\Gamma$ is called the characteristic polynomial of $\Gamma$ and is denoted by $\phi(x).$ The eigenvalues of $A(\Gamma)$ are called the eigenvalues of $\Gamma$. The set of eigenvalues of $\Gamma$ together with their multiplicities are called the spectrum of $\Gamma$. If
$\Gamma$ is a signed graph of order $n$ having distinct eigenvalues $x_1, x_2, \dots, x_k$ and their respective multiplicities as $m_1,m_2, \dots ,m_k,$
we write the spectrum of $\Gamma$ as $spec(\Gamma) = \{x^{m_1}, x^{m_2}, \dots , x^{m_k} \}.$

The following is the coefficient theorem for signed graphs\cite{A80}.

\begin{theorem}\label{thm1.1}
  If $\Gamma$ is a signed graph with characteristic polynomial
  $$\phi(x)=x^n + a_1(\Gamma)x^{n-1} + \cdot\cdot\cdot + a_{n-1}(\Gamma)x + a_n(\Gamma),$$
then
$$a_j(\Gamma)=\sum_{L\in \mathcal{L}_j}(-1)^{P(L)}2^{|c(L)|}\prod_{Z\in c(L)}\sigma(Z).$$
for all $j = 1, 2,\dots , n,$ where $\mathcal{L}_j$ is the set of all basic figures $L$ of $\Gamma$ of order $j$, $p(L)$ denotes number of components of $L,$  $c(L)$
denotes the set of all cycles of $L$ and $\sigma(Z)$ is the sign of cycle $Z.$
\end{theorem}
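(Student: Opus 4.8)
The plan is to derive the formula directly from the Leibniz expansion of $\det(xI - A(\Gamma))$, mirroring the classical Sachs coefficient theorem for unsigned graphs while tracking the extra sign data carried by $\sigma$. First I would invoke the standard relation between the coefficients of the characteristic polynomial and the principal minors of the adjacency matrix: since $\phi(x) = \prod_{i}(x - x_i)$, the coefficient $a_j(\Gamma)$ equals $(-1)^j$ times the $j$-th elementary symmetric function of the eigenvalues, and this symmetric function is the sum of all $j \times j$ principal minors of $A(\Gamma)$. Writing $A[S]$ for the principal submatrix indexed by a vertex set $S$, this gives
$$a_j(\Gamma) = (-1)^j \sum_{|S| = j} \det A[S].$$

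Next I would expand each minor by the permutation formula $\det A[S] = \sum_{\pi} \mathrm{sgn}(\pi)\prod_{i \in S} a_{i,\pi(i)}$, where $\pi$ ranges over permutations of $S$. Because $\Gamma$ is loopless, $a_{ii}=0$, so a term survives only when $\pi$ is fixed-point-free and every pair $\{i,\pi(i)\}$ is an edge. Decomposing $\pi$ into disjoint cycles, each transposition forces a single edge and each longer cyclic factor traces out a genuine cycle of $\Gamma$; hence the surviving permutations on $S$ are in bijection with the basic figures $L$ spanning $S$, i.e. the disjoint unions of edges and cycles covering $S$. A basic figure with $s = |c(L)|$ cyclic components arises from exactly $2^{s}$ permutations, one for each choice of orientation of each cycle.

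The decisive step is the sign-and-weight bookkeeping, which is where the signature enters. For a fixed $L$ with $r$ edge-components and cycles $Z_1,\dots,Z_s$ of lengths $\ell_1,\dots,\ell_s$, each associated permutation has sign $(-1)^{r}\prod_{k}(-1)^{\ell_k-1} = (-1)^{j - p(L)}$, using $2r + \sum_k \ell_k = j$ and $p(L) = r+s$. For the entry product I would check that an edge $\{i,j\}$ contributes $a_{ij}a_{ji} = \sigma(v_iv_j)^2 = 1$, while traversing a cycle $Z_k$ in either direction yields the same product of edge signs, namely $\sigma(Z_k)$; thus the weight equals $\prod_{Z \in c(L)}\sigma(Z)$ and is independent of the orientation choices. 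Summing the $2^{s}$ equal contributions and then over all $S$ gives $\sum_{|S|=j}\det A[S] = \sum_{L \in \mathcal{L}_j}(-1)^{j-p(L)}2^{|c(L)|}\prod_{Z\in c(L)}\sigma(Z)$, and multiplying by $(-1)^j$ collapses the sign to $(-1)^{2j-p(L)} = (-1)^{p(L)}$, which is the claimed identity. The only genuinely new point beyond the unsigned case is the orientation-independence of the cycle weight, so the main care is in verifying that reversing a cyclic factor of $\pi$ alters neither $\mathrm{sgn}(\pi)$ nor $\prod\sigma$; everything else is the familiar determinant-to-subgraph translation.
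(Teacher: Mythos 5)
Your proof is correct: the reduction of $a_j(\Gamma)$ to principal minors, the identification of the surviving permutations with basic figures, the count of $2^{|c(L)|}$ orientations per figure, the sign computation $(-1)^{j-p(L)}$, and the observation that the cycle weight $\sigma(Z)$ is orientation-independent are all sound, and they assemble into exactly the claimed identity. The paper itself gives no proof of this theorem --- it is quoted as a known result of Acharya --- and your argument is precisely the standard Sachs-type determinant expansion that underlies that reference, so there is nothing to flag.
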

From Theorem \ref{thm1.1}, it is clear that the spectrum of a signed graphs remains invariant by changing the signs of non-cyclic
edges. The spectral criterion for the balance of signed  graphs given by Acharya \cite{A80} is as follows.

\begin{theorem}\label{thm1.2}
  A signed graph is balanced if and only if it is co-spectral with the underlying unsigned graph.
\end{theorem}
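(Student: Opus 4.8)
The plan is to derive both directions from the coefficient theorem (Theorem \ref{thm1.1}), comparing the characteristic polynomial of $\Gamma$ with that of its underlying graph $G$ viewed as the all-positive signed graph $(G,+)$. Since $\Gamma$ and $(G,+)$ have the same underlying graph, they share exactly the same family of basic figures $\mathcal{L}_j$ for every $j$; the only difference in the formula for $a_j$ is the factor $\prod_{Z\in c(L)}\sigma(Z)$, which equals $1$ for every basic figure of $(G,+)$. I will use that two real symmetric matrices are cospectral precisely when they have equal characteristic polynomials, i.e. when $a_j(\Gamma)=a_j((G,+))$ for all $j$.

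For the easy direction, suppose $\Gamma$ is balanced. Then every cycle of $\Gamma$ is positive, and in particular every cycle $Z$ occurring in a basic figure $L$ of $\Gamma$ satisfies $\sigma(Z)=+1$. Hence $\prod_{Z\in c(L)}\sigma(Z)=1$ for each $L\in\mathcal{L}_j$, and the formula of Theorem \ref{thm1.1} gives $a_j(\Gamma)=a_j((G,+))$ for every $j$. Thus $\Gamma$ and its underlying graph have the same characteristic polynomial and are cospectral.

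For the converse I would argue by contraposition: assuming $\Gamma$ is unbalanced, I exhibit a single coefficient on which $\Gamma$ and $(G,+)$ disagree. Let $g$ be the smallest length of a negative cycle in $\Gamma$ (such a cycle exists since $\Gamma$ is unbalanced), so that every cycle of length less than $g$ is positive. I then examine $a_g((G,+))-a_g(\Gamma)=\sum_{L\in\mathcal{L}_g}(-1)^{p(L)}2^{|c(L)|}\bigl(1-\prod_{Z\in c(L)}\sigma(Z)\bigr)$. A basic figure $L$ contributes a nonzero term only when the product of its cycle signs is $-1$, i.e. when $L$ contains a negative cycle; but $L$ has only $g$ vertices and any negative cycle in it has length at least $g$, so such an $L$ must consist of exactly one cycle, a negative $g$-cycle, with no other components. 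Hence $p(L)=|c(L)|=1$ and each contributing figure gives $(-1)^1 2^1\bigl(1-(-1)\bigr)=-4$. Summing over the $N_g^{-}\ge 1$ negative $g$-cycles yields $a_g((G,+))-a_g(\Gamma)=-4N_g^{-}\neq 0$, so $\Gamma$ is not cospectral with $G$.

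The crux, and the step most prone to error, is the choice of the index $g$ in the converse. For a generic coefficient the factor $(-1)^{p(L)}$ lets the contributions of different basic figures cancel, so no clean sign argument is available. Minimality of $g$ is exactly what forces every contributing basic figure to be a single negative $g$-cycle, eliminating all cancellation and guaranteeing a nonzero discrepancy. I would double-check the two places where minimality is used: that no cycle shorter than $g$ can be negative, and that $g$ vertices leave no room for a negative cycle together with any additional component.
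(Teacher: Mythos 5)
The paper states Theorem \ref{thm1.2} as a cited result of Acharya and supplies no proof of its own, so there is no internal argument to compare against; your proposal has to be judged on its own merits, and it is correct. The forward direction is immediate from Theorem \ref{thm1.1} exactly as you say, and your converse is the standard (and essentially only) clean argument: taking $g$ to be the minimal length of a negative cycle forces every basic figure $L\in\mathcal{L}_g$ with $\prod_{Z\in c(L)}\sigma(Z)=-1$ to be a single negative $g$-cycle, since a negative cycle needs at least $g$ vertices and $L$ has only $g$ of them; hence every contributing figure has $p(L)=|c(L)|=1$ and contributes the same quantity $-4$, no cancellation is possible, and $a_g((G,+))-a_g(\Gamma)=-4N_g^{-}\neq 0$ with $N_g^{-}\geq 1$. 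Your identification of the minimality of $g$ as the crux is accurate, and both uses of minimality that you flag do hold.
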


The concept of energy of a graph was given by Gutman \cite{G78} in 1978. This concept was extended to signed  graphs by Germina, Hameed
and Zaslavsky \cite{G10} and they defined the energy of a signed graphs $\Gamma$ to be the sum of absolute values of eigenvalues of $\Gamma$. The
concept of energy has been extended to digraphs by Pena and Rada \cite{P08} and to signed digraphs by Bhat \cite{P14}. For
applications of signed  graphs in chemistry see \cite{G95}. For unsigned graphs and signed graphs with extremal energy
see \cite{H02,H11,HB11,M17,L08,G07,GY01}.

The girth of a graph (signed graph) is the length of its smallest cycle and is denoted by $g$. Let $P^g_n$ ($\mathcal{P}^g_n$) denotes the
balanced (unbalanced) unicyclic signed graph of order $n$ obtained by connecting a vertex of
 $C_g^+$ ($C_g^{\overline{\sigma}}$) with a terminal vertex of the path $P_{n-g},$ where $n\geq g\geq  3$
and let $\mathbb{U}(n, g)$ denotes the set of unicyclic signed  graphs of order $n$ and girth $g$.
$P^g_n$ and $\mathcal{P}^4_n$ are depicted in Fig \ref{Fig-1}, positive edges are depicted as
bold lines, negative edges as dash lines. For any unbalanced unicyclic signed graph, it is switching equivalent to the unicyclic signed graph such that exactly one negative edge in the circle, and the rest edges are positive. Then all unbalanced signed graphs $(P^4_n,\sigma)$ are switching equivalent to $\mathcal{P}^4_n.$

\begin{figure}
\begin{center}
  \includegraphics[width=10cm,height=2cm]{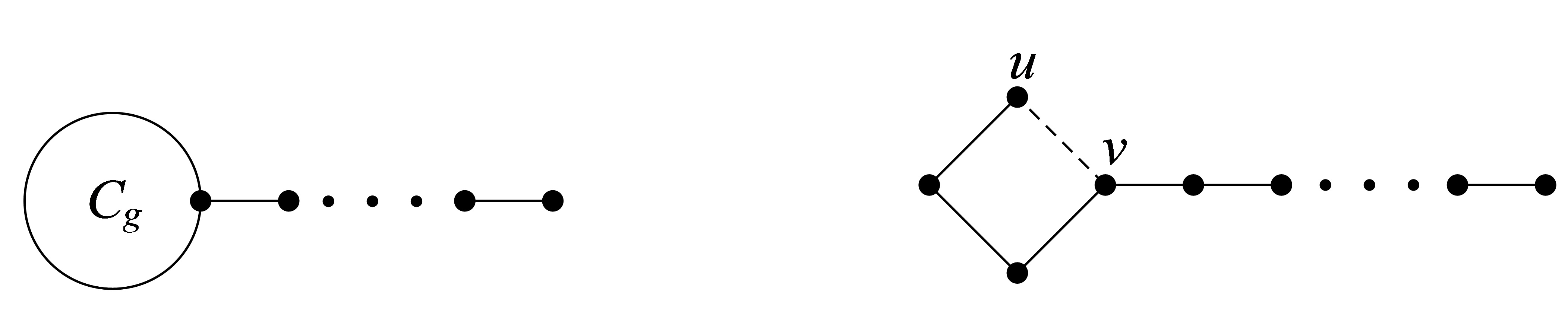}
  \end{center}
   \vskip -0.8cm\caption{Graph $P_n^g$ and signed graphs $\mathcal{P}_n^4$.}
  \label{Fig-1}
\end{figure}

The organization of this paper is following:
 in Section 2, we give some well known results which will be used in this paper. In section 3, we prove that for $n=4,6$ and $n \geq 8,$ $\mathcal{P}_n^4$ has the maximal energy among all connected unicyclic $n$-vertex signed graphs, except the cycles $C_5^+, C_7^+.$

\section{Preliminaries}

Firstly, Gill and Acharya \cite{GA80} obtained the following recurrence formula for the characteristic polynomial of a signed graph.

\begin{lemma}\label{lem2.1}
Let $uv$ be an edge of a signed graph $\Gamma,$ then
  $$\phi(\Gamma,x)=\phi(\Gamma-uv,x)-\phi(\Gamma-u-v,x)-2\sum_{C\in \mathcal{C}_{uv}}sgn(C)\phi(G-C,x),$$
  where $\mathcal{C}_{uv}$ is the set of cycles containing $uv$.

  In particular, if $uv$ is a pendant edge of $\Gamma$ with the pendant vertex $v$, then
  $$\phi(\Gamma,x)=x\phi(\Gamma-v,x)-\phi(\Gamma-u-v,x).$$
\end{lemma}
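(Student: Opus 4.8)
The plan is to derive the recurrence directly from the coefficient theorem (Theorem \ref{thm1.1}) by sorting the basic figures of $\Gamma$ according to how they interact with the fixed edge $uv$. Write $\phi(\Gamma,x)=\sum_{j=0}^{n}a_j(\Gamma)x^{n-j}$ with $a_0(\Gamma)=1$, and recall that each $a_j(\Gamma)$ is a weighted count over $L\in\mathcal{L}_j$, where $L$ carries the weight $w(L)=(-1)^{p(L)}2^{|c(L)|}\prod_{Z\in c(L)}\sigma(Z)$. Since every such $L$ is a vertex-disjoint union of edges ($K_2$'s) and cycles, the edge $uv$ can enter $L$ in exactly one of three mutually exclusive ways: (i) no component of $L$ uses $uv$; (ii) $uv$ is itself a $K_2$-component of $L$; (iii) $uv$ lies on a cycle component $C$ of $L$. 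I will compute the contribution of each type to $a_j(\Gamma)$ and then resum against $x^{n-j}$.

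For type (i) the relevant basic figures are exactly the basic figures of $\Gamma-uv$, so their total contribution to $a_j(\Gamma)$ is $a_j(\Gamma-uv)$; as $\Gamma-uv$ still has $n$ vertices, summing over $j$ reproduces $\phi(\Gamma-uv,x)$. For type (ii), deleting the $K_2$-component $uv$ gives a bijection with the basic figures $L'$ of $\Gamma-u-v$ of order $j-2$; here $p(L)=p(L')+1$ while $c(L)$ and the cycle-sign product are unchanged, so $w(L)=-w(L')$ and the contribution is $-a_{j-2}(\Gamma-u-v)$. After the shift $k=j-2$ and matching powers of $x$ (note $\Gamma-u-v$ has $n-2$ vertices) this block resums to $-\phi(\Gamma-u-v,x)$. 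For type (iii), fix a cycle $C\in\mathcal{C}_{uv}$ of length $\ell$; deleting $C$ yields a bijection with the basic figures $L''$ of $\Gamma-C$ of order $j-\ell$, and now $p(L)=p(L'')+1$, $|c(L)|=|c(L'')|+1$, and $\prod_{Z\in c(L)}\sigma(Z)=\sigma(C)\prod_{Z\in c(L'')}\sigma(Z)$, whence $w(L)=-2\,\mathrm{sgn}(C)\,w(L'')$. Summing over $L''$, over $j$ (shifting by $\ell$ and using that $\Gamma-C$ has $n-\ell$ vertices), and finally over all $C\in\mathcal{C}_{uv}$ gives $-2\sum_{C\in\mathcal{C}_{uv}}\mathrm{sgn}(C)\,\phi(\Gamma-C,x)$. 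Adding the three blocks produces the asserted identity.

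The special case is then immediate: if $uv$ is a pendant edge with pendant vertex $v$, then $v$ has degree one and lies on no cycle, so $\mathcal{C}_{uv}=\varnothing$ and the cycle sum disappears. Moreover, in $\Gamma-uv$ the vertex $v$ becomes isolated, and an isolated vertex contributes a factor $x$, so $\phi(\Gamma-uv,x)=x\,\phi(\Gamma-v,x)$; substituting gives $\phi(\Gamma,x)=x\phi(\Gamma-v,x)-\phi(\Gamma-u-v,x)$.

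The only genuinely delicate point is the sign and power-of-two bookkeeping in type (iii): one must verify that peeling off a cycle component increases $p(L)$ by exactly one (yielding the overall minus sign) and $|c(L)|$ by exactly one (yielding the factor $2$), while $C$ removes precisely its own sign $\sigma(C)=\mathrm{sgn}(C)$ from the product. I would also check the degenerate boundaries — for instance when $\ell=j$ so $L''$ is empty ($a_0=1$), or when $\Gamma-C$ has no vertices — confirming that the convention $\phi(\varnothing,x)=1$ makes every index shift in $x^{n-j}$ align, so that each of the three blocks reassembles into a full characteristic polynomial rather than a truncated sum.
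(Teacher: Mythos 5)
Your proof is correct. Note, however, that the paper itself gives no proof of this lemma: it is imported verbatim from Gill and Acharya \cite{GA80}, so there is no in-paper argument to compare against. Your derivation from the signed Sachs coefficient theorem (Theorem \ref{thm1.1}) is the standard and complete one: the trichotomy on how a basic figure meets $uv$ is exhaustive and mutually exclusive (for type (iii) the classes indexed by distinct $C\in\mathcal{C}_{uv}$ are disjoint because $uv$ lies in at most one component of a basic figure), the weight bookkeeping $w(L)=-w(L')$ and $w(L)=-2\,\mathrm{sgn}(C)\,w(L'')$ is right, and the index shifts correctly reassemble each block into the characteristic polynomial of a graph on $n$, $n-2$, or $n-\ell$ vertices respectively. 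Your handling of the pendant case via $\phi(\Gamma-uv,x)=x\,\phi(\Gamma-v,x)$ is also correct. One incidental observation: the statement as printed writes $\phi(G-C,x)$ where $\phi(\Gamma-C,x)$ is meant; your proof silently uses the correct signed version, which is the right reading.
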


The energy of a signed graph $\Gamma,$ denoted by $E(\Gamma),$ is defined by Germina, Hameed and
Zaslavsky \cite{G10} as following: $E(\Gamma)=\sum^{n}_{j=1}|x_j|,$ where $x_1, x_2, \dots , x_n$ are the eigenvalues of signed graphs $\Gamma$.
 The \emph{Coulson integral formula} \cite{M17,G2001} is
$$E(\Gamma)=\frac{1}{\pi}\int_{-\infty}^{\infty}\frac{1}{x^2}\log\bigl|x^{n}\phi(\frac{i}{x})\bigl|dx,$$
where $i=\sqrt{-1}$ and
$\int_{-\infty}^{\infty}F(x)dx$ denotes the principle value of the respective integral. Moreover, it is known that the above equality can be expressed
an explicit formula as follows:
$$E(\Gamma)=\frac{1}{2\pi}\int_{-\infty}^{\infty}\frac{1}{x^2}\log\bigg[\bigg(\sum_{j=0}^{\lfloor\frac{n}{2}\rfloor}(-1)^ja_{2j}(\Gamma)x^{2j}\bigg)^2+    \bigg(\sum_{j=0}^{\lfloor\frac{n}{2}\rfloor}(-1)^ja_{2j+1}(\Gamma)x^{2j+1}\bigg)^2\bigg]dx.$$
where $a_1, a_2,\dots , a_n$ are the coefficients of the characteristic polynomial $\phi(x).$

We know that a graph containing at least one edge is bipartite if and only if its spectrum,
considered as a set of points on the real axis, is symmetric with respect to the origin. This is not true for signed  graphs.
There exist non bipartite signed  graphs whose spectrum is symmetric about the origin. Two counter-examples are given by Bhat \cite{M17}.
We say that a signed graphs has the pairing property if its spectrum is symmetric with respect to origin. We denote by $\Delta_n,$ the set of all signed  graphs on $n$ vertices with pairing property. Put $b_{2j}(\Gamma) = (-1)^ja_{2j}(\Gamma),$ $b_{2j+1}(\Gamma) = (-1)^ja_{2j+1}(\Gamma).$ Note $b_1(\Gamma) = 0,$ $b_2(\Gamma)$ = the number of edges of the
signed graphs $\Gamma$ and so on.

The next result shows that all odd coefficients of a signed graphs in $\Delta_n,$ are zero and all even coefficients alternate in
sign \cite{B2015}.

\begin{lemma}\label{lem2.2}\cite{B2015}
Let $\Gamma$ be a signed graph of order $n.$ Then the following statements are equivalent:\\
(i) Spectrum of $\Gamma$ is symmetric about the origin.\\
(ii) $\phi(x)=x^n + \sum^{\lfloor\frac{n}{2} \rfloor}_{k=1}(-1)^kb_{2k}(\Gamma)x^{n-2k},$ where $b_{2k}(\Gamma)=|a_{2k}(S)|$ for all $k = 1, 2,\dots , \lfloor\frac{n}{2} \rfloor.$\\
(iii) $\Gamma$ and $-\Gamma$ are co-spectral, where $-\Gamma$ is the signed graph obtained by negating sign of each edge of $\Gamma.$
\end{lemma}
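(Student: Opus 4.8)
The plan is to route everything through the characteristic polynomial and the simple observation that negating every edge negates the whole spectrum. Concretely, since $A(-\Gamma)=-A(\Gamma)$, the eigenvalues of $-\Gamma$ are exactly $-x_1,\dots,-x_n$, and equivalently $\phi_{-\Gamma}(x)=\det(xI+A(\Gamma))=(-1)^n\phi_\Gamma(-x)$. This single identity will drive all three implications.

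First I would dispose of (i) $\Leftrightarrow$ (iii). Two signed graphs are co-spectral precisely when their eigenvalue multisets coincide. Since the multiset for $-\Gamma$ is $\{-x_j\}$, co-spectrality of $\Gamma$ and $-\Gamma$ says $\{x_j\}=\{-x_j\}$ as multisets, which is the literal meaning of the spectrum being symmetric about the origin. Hence (i) and (iii) are the same condition, with no computation needed.

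Next I would prove (i) $\Rightarrow$ (ii). Assuming symmetry, the nonzero eigenvalues split into pairs $\pm\mu_1,\dots,\pm\mu_p$ together with $m=n-2p$ zeros, so $\phi(x)=x^m\prod_{j=1}^{p}(x^2-\mu_j^2)$. Expanding the product through the elementary symmetric functions $e_k$ of the nonnegative numbers $\mu_1^2,\dots,\mu_p^2$, the coefficient of $x^{n-2k}$ equals $(-1)^k e_k$ while every odd power of $x$ drops out. Thus $a_{2k+1}(\Gamma)=0$ and $a_{2k}(\Gamma)=(-1)^k e_k$, which both gives the displayed form $\phi(x)=x^n+\sum_{k}(-1)^k b_{2k}(\Gamma)x^{n-2k}$ and, because each $e_k\ge 0$, forces $b_{2k}(\Gamma)=(-1)^k a_{2k}(\Gamma)=e_k=|a_{2k}(\Gamma)|$. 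For the converse (ii) $\Rightarrow$ (i) I would simply note that the form in (ii) exhibits $\phi$ as $x^{\,n\bmod 2}$ times a polynomial in $x^2$, so $\phi(-x)=(-1)^n\phi(x)$; hence the root multiset is invariant under $x\mapsto -x$, i.e.\ the spectrum is symmetric.

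The genuinely informative part is the ``even coefficients alternate in sign'' claim hidden in (ii), that is $b_{2k}(\Gamma)=|a_{2k}(\Gamma)|$; this is where one actually uses that the paired eigenvalues contribute the nonnegative quantities $\mu_j^2$ to the elementary symmetric functions. I expect this to be the only step requiring care, the remaining implications being bookkeeping on the identity $\phi_{-\Gamma}(x)=(-1)^n\phi_\Gamma(-x)$. (Here I read the ``$a_{2k}(S)$'' in statement (ii) as $a_{2k}(\Gamma)$.)
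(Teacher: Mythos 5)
The paper offers no proof of this lemma at all: it is quoted verbatim from the cited reference \cite{B2015}, so there is no internal argument to compare against, and your proposal in effect supplies the missing proof. Your argument is correct and complete, and it is essentially the standard one: the identity $\phi_{-\Gamma}(x)=\det(xI+A(\Gamma))=(-1)^n\phi_{\Gamma}(-x)$ makes (i) and (iii) literally the same condition on eigenvalue multisets; the factorization $\phi(x)=x^{m}\prod_{j=1}^{p}(x^{2}-\mu_j^{2})$ with the elementary symmetric functions $e_k(\mu_1^2,\dots,\mu_p^2)\geq 0$ gives (i) $\Rightarrow$ (ii), correctly isolating the only nontrivial content, namely the sign pattern $b_{2k}(\Gamma)=(-1)^k a_{2k}(\Gamma)=|a_{2k}(\Gamma)|$; and the parity argument $\phi(-x)=(-1)^n\phi(x)$ gives (ii) $\Rightarrow$ (i), where you rightly observe that only the vanishing of the odd coefficients is needed. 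Your reading of the ``$a_{2k}(S)$'' in statement (ii) as $a_{2k}(\Gamma)$ is the correct repair of a typo carried over from the source.
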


 Obviously, if $\Gamma$ is a bipartite signed graph, then $\Gamma$ has the pairing property, i.e., $\Gamma\in \Delta_n.$

Now, we define a quasi-order relation for signed  graphs in $\Gamma\in \Delta_n$ and show that it is possible to compare the energies of signed  graphs in $\Gamma\in \Delta_n.$

Given signed  graphs $\Gamma_1$ and $\Gamma_2$ in $\Delta_n$, by Lemma \ref{lem2.2}, for $i = 1, 2,$ we have
$$\phi_i(x)=x^n + \sum^{\lfloor\frac{n}{2} \rfloor}_{k=1}(-1)^kb_{2k}(\Gamma_i)x^{n-2k},$$
where $b_{2j}(\Gamma_i)$ are non negative integers for all $j = 1, 2,\dots , \lfloor\frac{n}{2} \rfloor.$ If $b_{2j}(\Gamma_1) \leq b_{2j}(\Gamma_2)$
for all $j = 1, 2,\dots , \lfloor\frac{n}{2} \rfloor,$ we define
$\Gamma_1 \preceq \Gamma_2.$  If in addition $b_{2j}(\Gamma_1)< b_{2j}(\Gamma_2)$ for some $j = 1, 2,\dots , \lfloor\frac{n}{2} \rfloor,$ we write $\Gamma_1 \prec \Gamma_2.$ Clearly, $\preceq$ is a quasi-order relation.
The following result shows that the energy increases with respect to this quasi-order
relation.

\begin{lemma}\label{lem2.3}\cite{M17}
If $\Gamma\in \Delta_n,$ then
  $$E(\Gamma)=\frac{1}{\pi}\int_{-\infty}^{\infty}\frac{1}{x^2}\log\bigl[1+\sum_{j=1}^{\lfloor\frac{n}{2}\rfloor}b_{2j}(\Gamma)x^{2j}\bigl]dx.$$
  In particular, if $\Gamma_1,\Gamma_2\in \Delta_n$ and $\Gamma_1\prec\Gamma_2,$ then $E(\Gamma_1) < E(\Gamma_2).$
\end{lemma}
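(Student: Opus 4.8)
The plan is to derive the stated integral expression directly from the explicit Coulson formula recorded just before the lemma, specializing it to a signed graph with the pairing property. First I would invoke Lemma \ref{lem2.2}: since $\Gamma\in\Delta_n$, every odd-indexed coefficient $a_{2j+1}(\Gamma)$ vanishes, so the second squared sum in the explicit formula is identically zero and the formula collapses to
\[
E(\Gamma)=\frac{1}{2\pi}\int_{-\infty}^{\infty}\frac{1}{x^2}\log\Big[\Big(\textstyle\sum_{j=0}^{\lfloor n/2\rfloor}(-1)^j a_{2j}(\Gamma)x^{2j}\Big)^2\Big]\,dx.
\]
Next I would substitute $b_{2j}(\Gamma)=(-1)^j a_{2j}(\Gamma)$, noting that $b_0(\Gamma)=1$, to rewrite the inner sum as $1+\sum_{j=1}^{\lfloor n/2\rfloor}b_{2j}(\Gamma)x^{2j}$. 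By Lemma \ref{lem2.2}(ii) each $b_{2j}(\Gamma)$ is nonnegative, so for every real $x$ this expression is at least $1$ and in particular strictly positive; hence $\log[(\cdot)^2]=2\log(\cdot)$ with no absolute-value ambiguity, and the factor $2$ cancels the $\frac{1}{2\pi}$ to leave $\frac{1}{\pi}$, which yields the claimed formula.

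For the comparison statement, suppose $\Gamma_1\prec\Gamma_2$. For each fixed $x\neq 0$ the map sending the tuple of coefficients $(b_{2j})$ to $\log\big(1+\sum_j b_{2j}x^{2j}\big)$ is increasing in every coordinate, because $x^{2j}>0$; therefore the integrand associated with $\Gamma_2$ dominates that associated with $\Gamma_1$ pointwise. Since $\Gamma_1\prec\Gamma_2$ forces $b_{2k}(\Gamma_1)<b_{2k}(\Gamma_2)$ for at least one $k$, this domination is in fact strict for every $x\neq 0$, a set of full measure against the positive measure $\frac{1}{\pi x^2}\,dx$. Monotonicity of the integral then gives $E(\Gamma_1)<E(\Gamma_2)$.

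The argument is essentially bookkeeping, so I do not expect a deep obstacle; the only points that require care are the vanishing of the odd part and the positivity of the inner polynomial, both of which I am reading off Lemma \ref{lem2.2}, together with accurate tracking of signs and constants when passing from $(-1)^j a_{2j}$ to $b_{2j}$ and from $\log[(\cdot)^2]$ to $2\log(\cdot)$. Convergence of the integral need not be reproved, as it is inherited from the explicit Coulson formula; if one wishes to check it directly, it suffices to observe that the integrand tends to the finite value $b_2(\Gamma)$ as $x\to 0$ and decays like $(\log|x|)/x^2$ as $|x|\to\infty$, so that both the improper integral and the termwise comparison are legitimate.
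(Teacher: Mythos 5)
Your proposal is correct: specializing the explicit Coulson formula by killing the odd-coefficient sum via Lemma \ref{lem2.2}, using the nonnegativity of the $b_{2j}$ to justify $\log[(\cdot)^2]=2\log(\cdot)$, and then reading off strict monotonicity of the integrand in the coefficients is exactly the standard argument for this quasi-order/energy comparison. The paper itself gives no proof (the lemma is quoted from the reference of Bhat and Pirzada), so there is nothing to contrast with; your derivation is the expected one and the convergence remarks at $x\to 0$ and $|x|\to\infty$ are accurate.
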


For the $k$-matching number of a graph $G,$ we have the following \cite{C79}.

\begin{lemma}\label{lem2.4}
  Let $e = uv$ be an edge of $G.$ Then

(i) $m(G, k) = m(G -e, k) + m(G-u-v, k-1).$

(ii) If $G$ is a forest, then $m(G, k)\leq m(P_n, k), k \geq 1.$

(iii) If $H$ is a subgraph of G, then $m(H, k) \leq m(G, k), k \geq 1.$ Moreover, if H is
a proper subgraph of G, then the inequality is strict for some $k$.
\end{lemma}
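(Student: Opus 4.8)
The plan is to handle the three parts in the order (i), (iii), (ii), because (i) is a purely combinatorial recursion, (iii) is an injection between sets of matchings, and the extremal estimate (ii) follows from the other two by induction. Throughout, $m(G,k)$ denotes the number of $k$-matchings of $G$, i.e.\ the number of sets of $k$ pairwise disjoint edges, with $m(G,0)=1$. For part (i), I would classify the $k$-matchings of $G$ according to whether or not they contain the distinguished edge $e=uv$. A $k$-matching omitting $e$ uses only edges of $G-e$, and conversely every $k$-matching of $G-e$ is a $k$-matching of $G$ avoiding $e$; this accounts for $m(G-e,k)$. A $k$-matching $M$ containing $e$ is determined by its remaining $k-1$ edges, which are pairwise disjoint and avoid $\{u,v\}$, hence form a $(k-1)$-matching of $G-u-v$; deleting and restoring $e$ gives a bijection between such $M$ and the $(k-1)$-matchings of $G-u-v$, accounting for $m(G-u-v,k-1)$. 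Since the two cases are disjoint and exhaustive, adding them yields (i).

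For part (iii), since $H$ is a subgraph of $G$ we have $E(H)\subseteq E(G)$, so any set of pairwise disjoint edges of $H$ is also a set of pairwise disjoint edges of $G$. Thus inclusion of edge sets embeds the $k$-matchings of $H$ into those of $G$, giving $m(H,k)\le m(G,k)$. For strictness when $H$ is a proper subgraph, I note that the essential case is the existence of an edge $e\in E(G)\setminus E(H)$ (extra isolated vertices in $G$ do not affect matchings); then at $k=1$ one has $m(H,1)=|E(H)|<|E(G)|=m(G,1)$, so the inequality is strict for $k=1$.

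Part (ii) is where the real work lies, and I would prove it by induction on $n=|V(G)|$, establishing the claim for all $k\ge1$ at once; the cases $k=0$ are trivial and small $n$ serve as base cases. Let $F$ be a forest on $n$ vertices. If $F$ has an isolated vertex $w$, then $m(F,k)=m(F-w,k)\le m(P_{n-1},k)\le m(P_n,k)$, using the inductive hypothesis on the forest $F-w$ and then (iii), since $P_{n-1}$ is a subgraph of $P_n$. Otherwise $F$ has a leaf $v$ with neighbour $u$; applying (i) gives $m(F,k)=m(F-v,k)+m(F-u-v,k-1)$, where $F-v$ and $F-u-v$ are forests on $n-1$ and $n-2$ vertices. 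Bounding these via the inductive hypothesis by $m(P_{n-1},k)$ and $m(P_{n-2},k-1)$, and applying (i) to an end-edge of $P_n$ to get $m(P_{n-1},k)+m(P_{n-2},k-1)=m(P_n,k)$, closes the induction.

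The main obstacle, and really the only nonroutine point, is organizing the induction in (ii) so that both recursion terms fall under the inductive hypothesis while the path's own end-edge recursion reproduces exactly the target bound. The structural facts to verify are that deleting a leaf (respectively a leaf together with its neighbour) from a forest again produces a forest on fewer vertices, and that the isolated-vertex case genuinely reduces to a strictly smaller path through (iii). Once these are in place, the estimate in (ii) is forced term by term, and (i) and (iii) are immediate.
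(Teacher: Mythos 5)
Your proof is correct, but note that the paper itself offers no argument for this lemma at all: it is quoted verbatim from the monograph of Cvetkovi\'{c}, Doob and Sachs \cite{C79}, so there is no internal proof to compare against. Your argument is the standard one from that literature, and every step checks out: (i) by partitioning $k$-matchings according to whether they contain $e$; (iii) by the inclusion of edge sets, with strictness at $k=1$ from $m(H,1)=|E(H)|<|E(G)|=m(G,1)$; and (ii) by induction on $n$, where in the leaf case $m(F,k)=m(F-e,k)+m(F-u-v,k-1)$ with $m(F-e,k)=m(F-v,k)$ because $v$ is isolated in $F-e$, the boundary value $k-1=0$ is harmless since $m(\cdot,0)=1$ on both sides, and the end-edge recursion $m(P_n,k)=m(P_{n-1},k)+m(P_{n-2},k-1)$ reproduces the target exactly. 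One point you correctly isolate deserves emphasis: as literally stated, the strictness claim in (iii) fails when $H$ is a proper subgraph with $E(H)=E(G)$ (i.e., $G$ differs from $H$ only by isolated vertices), since then $m(H,k)=m(G,k)$ for every $k$; under the intended reading $E(H)\subsetneq E(G)$, which is the only case the paper ever uses, your $k=1$ count settles strictness, so this is a defect of the statement rather than of your proof.
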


Since the matching number is independent of signature, then Lemma \ref{lem2.4} also holds for signed graphs and $m(\Gamma,i)=m(G,i)$. We denote by $m(\Gamma, i)$ the number of matchings of $\Gamma$ of size $i.$  For convention, we let  $m(\Gamma, 0)=1$ and $m(\Gamma, i)=0$ for $i\geq\frac{n}{2}.$

\begin{lemma}\label{lem2.5}\cite{G86}
  Let $n = 4k, 4k + 1, 4k + 2$ or $4k + 3.$ Then
\begin{align*}
  P_n &\succ P_2\cup P_{n-2}\succ P_4\cup P_{n-4}\succ\dots \succ P_{2k}\cup P_{n-2k}\succ P_{2k+1}\cup P_{n-2k-1}\\
  & \succ P_{2k-1}\cup P_{n-2k+1}\succ \dots \succ P_3\cup P_{n-3}\succ P_1\cup P_{n-1}.
\end{align*}

\end{lemma}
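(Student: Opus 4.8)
The plan is to translate the quasi-order $\succ$ between these forests into a comparison of matching-generating polynomials and then to reduce every consecutive comparison in the chain to a single elementary identity.

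First I would record that each graph in the chain is a forest, hence bipartite with no cycles, so by Theorem \ref{thm1.1} only matchings contribute to its characteristic polynomial and $b_{2j}(\cdot)=m(\cdot,j)$. Writing $M(\Gamma,x)=\sum_{k\ge 0}m(\Gamma,k)x^k$ for the matching-generating polynomial and using $M(P_a\cup P_b,x)=M(P_a,x)\,M(P_b,x)$, the relation $G_1\succ G_2$ between two such forests holds exactly when $M(G_1,x)-M(G_2,x)$ is a nonzero polynomial with nonnegative coefficients. Thus it suffices to check this for each consecutive pair in the chain; transitivity of $\succ$ then gives the whole display.

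Second, I would set up the reduction. Applying Lemma \ref{lem2.4}(i) to the central edge of $P_{a+b}$ yields
$$M(P_{a+b},x)=M(P_a\cup P_b,x)+x\,M(P_{a-1}\cup P_{b-1},x),$$
with the conventions $M(P_0,x)=M(P_1,x)=1$. Consequently, for any two splits $(a,b)$ and $(a',b')$ of $n$ the terms $M(P_n,x)$ cancel and
$$M(P_a\cup P_b,x)-M(P_{a'}\cup P_{b'},x)=-x\bigl[M(P_{a-1}\cup P_{b-1},x)-M(P_{a'-1}\cup P_{b'-1},x)\bigr].$$
Iterating this $s$ times multiplies the difference by $(-x)^s$ and lowers every part by $s$, so it can be pushed until the shorter part of a split reaches $P_1$ or $P_0$.

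Third, I would apply this to the three kinds of consecutive comparison appearing in the chain. For two consecutive even splits, iterating the identity $2i$ times turns $M(P_{2i}\cup P_{n-2i},x)-M(P_{2i+2}\cup P_{n-2i-2},x)$ into $x^{2i}\bigl[M(P_{n-4i},x)-M(P_2,x)M(P_{n-4i-2},x)\bigr]$, and the central-edge identity for $M(P_{n-4i},x)$ (splitting off a $P_2$) collapses the bracket to $x\,M(P_{n-4i-3},x)$; hence the whole difference equals $x^{2i+1}M(P_{n-4i-3},x)$, a nonzero polynomial with nonnegative coefficients, so $P_{2i}\cup P_{n-2i}\succ P_{2i+2}\cup P_{n-2i-2}$. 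The descending odd-odd step reduces in the same fashion to a positive power of $x$ times some $M(P_r,x)$, and the single even-to-odd turn $P_{2k}\cup P_{n-2k}\succ P_{2k+1}\cup P_{n-2k-1}$ bottoms out at an explicit small comparison (for $n=4k$ this is $M(P_1,x)^2-M(P_2,x)=-x$), which after the accumulated sign $(-x)^s$ again leaves a clean positive power of $x$. In each case the residual is a single one-signed term pointing $\succ$ the asserted way.

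The main obstacle is purely the bookkeeping in this last step: tracking the parity of the number $s$ of iterations (which decides whether the accumulated factor $(-x)^s$ leaves the correct sign), the path lengths at which the recursion terminates, and verifying that each residual $x^{p}M(P_r,x)$ indeed orders the pair as claimed. Because the precise endpoints of the chain -- where the increasing even phase meets the decreasing odd phase, and which balanced split is the turning point -- depend on $n\bmod 4$, the four residue classes $n=4k,\,4k+1,\,4k+2,\,4k+3$ must each be checked for the correct termination indices, even though the reduction identity itself is uniform across all of them.
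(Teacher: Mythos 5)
The paper offers no proof of this lemma at all: it is quoted from \cite{G86}, so there is nothing internal to compare against. Your argument is the standard one for this ordering and it is correct. The key points all check out: for forests $b_{2j}(\cdot)=m(\cdot,j)$, Lemma \ref{lem2.4}(i) applied to the joining edge of $P_{a+b}$ gives $M(P_{a+b},x)=M(P_a\cup P_b,x)+x\,M(P_{a-1}\cup P_{b-1},x)$, the $M(P_n,x)$ terms cancel between two splits of $n$, and iterating drives each consecutive comparison down to an explicit one-signed residual. Your bookkeeping is right where it matters: the even--even step leaves $x^{2i+1}M(P_{n-4i-3},x)$ and the descending odd--odd step leaves $x^{2i}M(P_{n-4i-1},x)$, both nonzero with nonnegative coefficients for the indices that actually occur in the chain. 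The one caveat, which you only partly acknowledge, is that for $n=4k+1$ the turning pair $P_{2k}\cup P_{n-2k}$ and $P_{2k+1}\cup P_{n-2k-1}$ are the \emph{same} forest, and for $n=4k$ the first descending pair $P_{2k+1}\cup P_{2k-1}$ and $P_{2k-1}\cup P_{2k+1}$ are the same forest; there your residual is $0$ and the relation is equality, not strict $\succ$. That is an imprecision in the lemma as quoted rather than a flaw in your method (and it is harmless for the end-to-end comparisons the paper actually uses in Lemmas \ref{lem3.8} and \ref{lem3.9}), but you should state those two links with $\succeq$ rather than claim every residual "points $\succ$ the asserted way."
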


\section{Unicyclic signed graphs with maximal energy.}

In this section, we will prove that for $n=4,6$ and $n \geq 8,$ $\mathcal{P}_n^4$ has the maximal energy
among all connected unicyclic  signed  graphs on $n$ vertices, except the cycles $C_5^+, C_7^+.$

In \cite{C99}, Caporossi et al. proposed the following conjecture
on the unicyclic graph with maximal energy.

\noindent \textbf{Conjecture 1.} \emph{Among all unicyclic graphs on $n$ vertices, the cycle $C_n$ has maximal energy if $n \leq 7$ and
$n = 9, 10, 11, 13$ and $15.$ For all other values of $n,$ the unicyclic graph with maximal energy is $P^6_n.$}

The following result of Hou and Huo \cite{H02,H11,HB11} proves the conjecture.

\begin{theorem}\label{lem3.1}
Among all unicyclic graphs on $n$ vertices, the cycle $C_n$ has maximal energy if $n \leq 7$ and
$n = 9, 10, 11, 13$ and $15.$ For all other values of $n$, the unicyclic graph with maximal energy is $P^6_n.$
\end{theorem}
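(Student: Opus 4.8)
The plan is to prove the statement by reducing the field of candidates down to just two unicyclic graphs, $P^6_n$ and the cycle $C_n$, and then comparing these two directly. The reduction proceeds in two stages: first I fix the girth $g$ and locate the energy-maximal unicyclic graph among all those of girth $g$, and then I compare these girth-wise maxima with one another. Throughout I would use the coefficient theorem (Theorem~\ref{thm1.1} in its unsigned form, where every cycle sign is $+$), the quasi-order $\preceq$ together with the monotonicity of energy from Lemma~\ref{lem2.3}, and --- where the quasi-order does not apply --- the explicit Coulson integral.

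\textbf{Stage 1 (fix the girth).} For a unicyclic graph $G$ of girth $g$ the only basic figures are matchings and matchings augmented by the unique cycle $C_g$. For even $g$ this yields
$$(-1)^k a_{2k}(G)=m(G,k)-(-1)^{g/2}\,2\,m(G-C_g,\,k-\tfrac{g}{2}),$$
and an analogous expression in which $C_g$ feeds the odd coefficients when $g$ is odd. I would then show that among all girth-$g$ unicyclic graphs on $n$ vertices, $P^g_n$ simultaneously maximizes $m(G,k)$ and $m(G-C_g,k-\tfrac{g}{2})$ for every $k$: attaching the remaining $n-g$ vertices as one pendant path maximizes the matching numbers by Lemma~\ref{lem2.4}(ii)--(iii) and the path ordering of Lemma~\ref{lem2.5}, and it also makes $G-C_g$ a single path. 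For even $g$ this places $P^g_n$ at the top of the girth-$g$ class in the quasi-order, hence it has maximal energy there; for odd $g$ the same matching domination must be read off the even and odd parts of the Coulson integrand separately, since such graphs are not bipartite.

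\textbf{Stage 2 (reduce across girths to two candidates).} It remains to compare $E(P^g_n)$ for $3\le g\le n$, the value $g=n$ being the cycle $C_n$. The factor $-(-1)^{g/2}$ shows the cycle contribution to the even coefficients is \emph{positive} exactly when $g\equiv 2\pmod 4$. Among the even girths $6\le g<n$ (those still carrying a pendant path) I would use the quasi-order to prove $P^6_n\succ P^g_n$: the choice $g=6$ both realizes the favorable sign and leaves the longest pendant path, maximizing every matching count. The odd-girth graphs $P^g_n$ with $g<n$ are non-bipartite, so they fall outside any common pairing class; these I would dominate by $P^6_n$ through a direct Coulson estimate. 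After these comparisons only $P^6_n$ and $C_n$ survive.

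\textbf{Stage 3 (the decisive, and hardest, comparison).} The crux is to compare $P^6_n$ with $C_n$. Writing $R_G(x)=\sum_j(-1)^j a_{2j}(G)x^{2j}$ and $I_G(x)=\sum_j(-1)^j a_{2j+1}(G)x^{2j+1}$, the Coulson formula gives
$$E(P^6_n)-E(C_n)=\frac{1}{2\pi}\int_{-\infty}^{\infty}\frac{1}{x^2}\log\frac{R_{P^6_n}(x)^2+I_{P^6_n}(x)^2}{R_{C_n}(x)^2+I_{C_n}(x)^2}\,dx,$$
and the whole difficulty lies in controlling the sign of this integrand. The matching advantage of $P^6_n$ (a long path contributing to $R$) must be weighed against the genuinely different spectral shape of $C_n$, whose odd cycles make $I_{C_n}\neq 0$. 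I expect the sign of the difference to depend on the residue of $n$ modulo $4$ and to flip for exactly the sporadic small orders $n\le 7$ and $n=9,10,11,13,15$, where $C_n$ wins. Establishing this --- splitting into residue classes as in Lemma~\ref{lem2.5} and verifying the finitely many exceptional values by explicit computation of the low-order coefficients --- is the principal obstacle and is precisely the content of the Caporossi conjecture.
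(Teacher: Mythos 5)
There is no proof in the paper to compare against: Theorem~\ref{lem3.1} is imported verbatim from the literature (Hou--Gutman--Woo \cite{H02} for the bipartite case and Huo--Li--Shi \cite{H11,HB11} for the completion), and the present paper only cites it. So the real question is whether your outline would reconstruct that external proof, and it would not, because the step you label as ``the principal obstacle'' is exactly the step that constitutes the theorem. Your Stage 3 does not prove that $E(P^6_n)>E(C_n)$ for the stated values of $n$; it writes down the Coulson integral for the difference and then says you ``expect'' the sign to behave as claimed. The same deferral occurs twice earlier: in Stage 1 the odd-girth case is waved at (``the same matching domination must be read off the even and odd parts \dots separately''), and in Stage 2 the odd-girth graphs are to be ``dominated by a direct Coulson estimate.'' None of these comparisons is accessible to the quasi-order, precisely because non-bipartite unicyclic graphs and $C_n$ itself are not comparable with $P^6_n$ under $\preceq$; resolving them required two substantial papers of delicate real-analytic estimates (closed Chebyshev-type forms for $\phi(P_n,ix)$ and $\phi(C_n,ix)$, monotonicity of the resulting quotients, and case analysis on $n$). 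A proposal that identifies where the difficulty lies but does not resolve it is a restatement of the conjecture, not a proof.

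There is also a concrete internal error in Stage 1. Your own coefficient formula $(-1)^k a_{2k}(G)=m(G,k)-(-1)^{g/2}\,2\,m(G-C_g,k-\tfrac{g}{2})$ shows that for $g\equiv 0\pmod 4$ the cycle term enters with a \emph{minus} sign, so showing that $P^g_n$ ``simultaneously maximizes $m(G,k)$ and $m(G-C_g,k-\tfrac{g}{2})$'' does not place it at the top of the girth-$g$ class in the quasi-order --- maximizing a subtracted quantity pushes $b_{2k}$ down. Your Stage 2 correctly notes that the favorable sign occurs only for $g\equiv 2\pmod 4$, which contradicts the Stage 1 conclusion as stated for the other even residue. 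The reduction to the two candidates $P^6_n$ and $C_n$ therefore needs a different argument for girths divisible by $4$ (and for odd girths), and the final head-to-head comparison needs an actual analytic proof rather than an announced expectation.
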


In \cite{M17}, Bhat and Pirzada gave some results about the energy of the unicyclic signed  graphs as following.

\begin{lemma}\label{lem3.2}\cite{M17}
Let G be a unicyclic graph of odd girth. Then any two signed graphs on $G$ have the same energy.
\end{lemma}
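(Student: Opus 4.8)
The plan is to compare the finitely many signatures on $G$ by combining the coefficient theorem (Theorem \ref{thm1.1}) with the explicit energy integral. Since the signature on non-cyclic edges is irrelevant and a unicyclic $G$ has a unique cycle $C$ whose length equals the odd girth $g$, every signed graph on $G$ is switching equivalent to either the graph $\Gamma_+$ with $\sigma(C)=+$ or the graph $\Gamma_-$ with $\sigma(C)=-$. As switching preserves the spectrum, and hence the energy, it suffices to prove $E(\Gamma_+)=E(\Gamma_-)$.

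Next I would analyze the coefficients $a_j$ using Theorem \ref{thm1.1}. Because $G$ is unicyclic, every basic figure $L$ contains at most one cycle, and that cycle can only be $C$. Thus each $L$ is of one of two types: either (a) a matching, for which $c(L)=\emptyset$ and $\prod_{Z\in c(L)}\sigma(Z)=1$, or (b) the disjoint union of $C$ with a matching, for which $c(L)=\{C\}$ and $\prod_{Z\in c(L)}\sigma(Z)=\sigma(C)$. The decisive parity observation is that a matching covers an even number of vertices, whereas a figure of type (b) covers $g+2m$ vertices, which is \emph{odd} because $g$ is odd. Consequently, type (a) figures contribute only to the even-indexed coefficients $a_{2k}$, while type (b) figures contribute only to the odd-indexed coefficients $a_{2k+1}$. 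Therefore every even coefficient is independent of the signature, and every odd coefficient equals $\sigma(C)$ times a quantity that does not depend on the signature. Passing from $\Gamma_+$ to $\Gamma_-$ thus leaves each even coefficient fixed and negates each odd coefficient.

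Finally I would substitute this into the explicit energy formula
$$E(\Gamma)=\frac{1}{2\pi}\int_{-\infty}^{\infty}\frac{1}{x^2}\log\bigg[\bigg(\sum_{j}(-1)^ja_{2j}(\Gamma)x^{2j}\bigg)^2+\bigg(\sum_{j}(-1)^ja_{2j+1}(\Gamma)x^{2j+1}\bigg)^2\bigg]dx.$$
The first squared sum involves only even coefficients and so is identical for $\Gamma_+$ and $\Gamma_-$; the second squared sum involves only odd coefficients, which merely change sign, so after squaring it too is identical. The two integrands coincide, giving $E(\Gamma_+)=E(\Gamma_-)$, which establishes the claim.

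The crux of the argument is the parity bookkeeping: one must notice that the odd length of $C$ confines every cycle-containing basic figure to odd order, so the sign $\sigma(C)$ influences only the odd coefficients, and these enter the energy functional solely through a squared term. Once this clean separation of even and odd coefficients is in hand, the remaining steps are routine.
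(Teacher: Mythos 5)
Your proof is correct. Note, however, that the paper states this lemma without proof, quoting it from \cite{M17}, so there is no in-house argument to compare against; judged on its own merits your write-up is sound and is essentially the standard argument for this fact. The two key points are both handled properly: (i) since a unicyclic graph has a single cycle $C$, Theorem \ref{thm1.1} shows that basic figures without $C$ are matchings (even order, signature-independent contribution) while basic figures containing $C$ have order $g+2m$, which is odd, and contribute $\sigma(C)$ times a signature-independent quantity; (ii) switching classes of a unicyclic signed graph are determined by $\sigma(C)$ alone, so only the comparison of $\Gamma_+$ with $\Gamma_-$ is needed. One small simplification you could make at the end: once you know that passing from $\Gamma_+$ to $\Gamma_-$ fixes every even coefficient and negates every odd one, you have $\phi_{\Gamma_-}(x)=(-1)^n\phi_{\Gamma_+}(-x)$, so the spectrum of $\Gamma_-$ is the reflection of that of $\Gamma_+$ through the origin and $E(\Gamma_-)=E(\Gamma_+)$ follows immediately from the definition of energy as $\sum_j|x_j|$, without invoking the Coulson integral at all.
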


\begin{lemma}\label{lem3.3}\cite{M17}
Let $G$ be a unicyclic graph of order $n$ and even girth $g,$ i.e., the bipartite unicyclic graph and let S be any balanced
signed graph on $G$ and $T$ be any unbalanced one. Then

(i) $E(S)<E(T)$ if and only if $g$ $\equiv$ $0$ $($mod  $4)$,

(ii)$E(S)>E(T)$ if and only if $g$ $\equiv$ $2$ $($mod  $4)$.
\end{lemma}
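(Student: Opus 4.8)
The plan is to place both signed graphs in $\Delta_n$ and compare them coefficientwise. Since $G$ is unicyclic with even girth $g$, its unique cycle has even length and $G$ is therefore bipartite; hence both $S$ and $T$ are bipartite signed graphs and belong to $\Delta_n$ by the remark following Lemma \ref{lem2.2}. Their energies are then governed by the quasi-order $\preceq$ via Lemma \ref{lem2.3}, so it suffices to decide, according to the residue of $g$ modulo $4$, whether $S \prec T$ or $T \prec S$ holds.

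Next I would evaluate the even coefficients $a_{2k}$ using the coefficient theorem (Theorem \ref{thm1.1}). In a unicyclic graph the only cycle a basic figure can contain is $C_g$, so every basic figure of order $2k$ is either a $k$-matching (no cycle) or the cycle $C_g$ together with a matching of size $k-\tfrac{g}{2}$ on $G-C_g$. The $k$-matchings contribute $(-1)^k m(G,k)$, while the cycle term contributes $(-1)^{1+k-g/2}\,2\,\sigma(C_g)\,m(G-C_g,k-\tfrac{g}{2})$. Collecting these,
\[
a_{2k}(\Gamma) = (-1)^k\Bigl[\,m(G,k) - 2(-1)^{g/2}\sigma(C_g)\,m\bigl(G-C_g,\,k-\tfrac{g}{2}\bigr)\,\Bigr].
\]
Since $S,T\in\Delta_n$, Lemma \ref{lem2.2} gives $b_{2k}=|a_{2k}|$, and inserting $\sigma(C_g)=+1$ for the balanced $S$ and $\sigma(C_g)=-1$ for the unbalanced $T$ yields
\begin{align*}
b_{2k}(S) &= \bigl|\,m(G,k) - 2(-1)^{g/2} m(G-C_g,\,k-\tfrac{g}{2})\,\bigr|,\\
b_{2k}(T) &= \bigl|\,m(G,k) + 2(-1)^{g/2} m(G-C_g,\,k-\tfrac{g}{2})\,\bigr|.
\end{align*}

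Now I would split on the parity of $g/2$. Writing $p=m(G,k)\ge 0$ and $q=2\,m(G-C_g,k-\tfrac{g}{2})\ge 0$, the two quantities are $|p\mp q|$ and $|p\pm q|$, and the one equal to $p+q$ always dominates the one equal to $|p-q|$, with gap $p+q-|p-q|=2\min(p,q)$. When $g\equiv 0 \pmod 4$ we have $(-1)^{g/2}=1$, so $b_{2k}(T)=p+q\ge|p-q|=b_{2k}(S)$ for every $k$; when $g\equiv 2 \pmod 4$ we have $(-1)^{g/2}=-1$, so $b_{2k}(S)=p+q\ge b_{2k}(T)$ for every $k$. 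In either case the inequality is strict at $k=\tfrac{g}{2}$, where $q=2\,m(G-C_g,0)=2>0$ and $p=m(G,\tfrac{g}{2})\ge 1$ because $C_g$ itself carries a perfect matching of size $\tfrac{g}{2}$. Hence $S\prec T$ when $g\equiv 0$ and $T\prec S$ when $g\equiv 2$, and Lemma \ref{lem2.3} gives $E(S)<E(T)$ and $E(S)>E(T)$ respectively. Because an even $g$ is congruent to $0$ or $2$ modulo $4$ and these alternatives are mutually exclusive, each strict energy inequality occurs \emph{exactly} in its stated case, which is precisely the two ``if and only if'' statements.

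The main obstacle is the sign bookkeeping in the coefficient formula: correctly combining the factor $(-1)^{p(L)}$ with the interplay between $(-1)^{g/2}$ (the parity of the number of cycle vertices not covered by a matching edge) and $\sigma(C_g)$, and then checking that taking $b_{2k}=|a_{2k}|$ collapses everything to the clean ``sum versus difference'' comparison. Once the expression for $a_{2k}(\Gamma)$ is pinned down, the monotonicity of energy along $\preceq$ makes the remainder routine.
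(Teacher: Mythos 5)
Your proof is correct. Note that the paper itself offers no proof of this lemma --- it is quoted verbatim from \cite{M17} --- so there is nothing internal to compare against; your argument (compute $a_{2k}$ via Theorem \ref{thm1.1} as $(-1)^k\bigl[m(G,k)-2(-1)^{g/2}\sigma(C_g)\,m(G-C_g,k-\tfrac{g}{2})\bigr]$, observe both signed graphs lie in $\Delta_n$, and compare coefficientwise via Lemma \ref{lem2.3}, with strictness at $k=\tfrac{g}{2}$) is exactly the standard route taken in the cited source, and the sign bookkeeping checks out.
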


By Theorem \ref{lem3.1}, Lemmas \ref{lem3.2} and \ref{lem3.3}, we can easily obtain the following corollary.

\begin{corollary}\label{cor3.4}
  Let $G$ be a unicyclic graph with girth $g$ is odd or $g$ $\equiv$ $2$ $($mod  $4)$. For any signed graph $\Gamma=(G,\sigma)\in \mathbb{U}(n,g),$ we have $E(\Gamma)\leq E(C_n)$ for $n \leq 7,$
$n = 9, 10, 11, 13,15,$ and $E(\Gamma)\leq E(P^6_n)$ for all other values of $n.$
\end{corollary}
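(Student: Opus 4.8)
The plan is to reduce the energy of an arbitrary signed graph $\Gamma=(G,\sigma)\in\mathbb{U}(n,g)$ to the energy of its underlying unsigned unicyclic graph $G$, and then apply Theorem \ref{lem3.1}. The pivotal observation is that by Theorem \ref{thm1.2} the balanced signing $(G,+)$ is cospectral with the unsigned graph $G$, so $E(G,+)=E(G)$, where on the right $E(G)$ denotes the ordinary energy of the unsigned unicyclic graph $G$. Thus the whole argument hinges on comparing $E(\Gamma)$ with $E(G,+)$ and then comparing $E(G)$ with the extremal values from the unsigned theory.

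First I would dispose of the odd-girth case. If $g$ is odd, Lemma \ref{lem3.2} says every signing of $G$ has the same energy; in particular $E(\Gamma)=E(G,+)=E(G)$. Next, for the case $g\equiv 2\ (\mathrm{mod}\ 4)$, Lemma \ref{lem3.3}(ii) gives $E(S)>E(T)$ for the balanced signing $S=(G,+)$ and any unbalanced signing $T$ on $G$. Hence for every signature $\sigma$ one has $E(\Gamma)\le E(G,+)=E(G)$, with equality exactly when $\Gamma$ is balanced. In both cases the conclusion is the clean inequality $E(\Gamma)\le E(G)$.

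Finally I would combine this with Theorem \ref{lem3.1}. Since $G$ is a unicyclic graph on $n$ vertices, Theorem \ref{lem3.1} bounds its energy by the maximum energy over all unicyclic $n$-vertex graphs: $E(G)\le E(C_n)$ when $n\le 7$ or $n\in\{9,10,11,13,15\}$, and $E(G)\le E(P_n^6)$ for all other $n$. Chaining this with the previous step yields $E(\Gamma)\le E(C_n)$ (respectively $E(\Gamma)\le E(P_n^6)$) in every case, which is exactly the stated bound once we note that $E(C_n)$ and $E(P_n^6)$ are understood as the energies of the unsigned extremal graphs.

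Since the corollary is flagged as an easy consequence, there is no genuine obstacle; the only point requiring care is that Theorem \ref{lem3.1} quantifies over \emph{all} unicyclic graphs of order $n$ without constraining the girth, so even though the extremal graphs $C_n$ and $P_n^6$ typically have girths different from $g$, they still dominate $E(G)$. The subtler of the two steps is the even-girth case, where one must genuinely use that $g\equiv 2\ (\mathrm{mod}\ 4)$ forces the balanced signing to be the energy-maximizer among signings of $G$ (Lemma \ref{lem3.3}(ii)); the hypothesis on $g$ in the corollary is precisely what guarantees this direction of the inequality.
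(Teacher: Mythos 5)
Your proposal is correct and follows exactly the route the paper intends: use Lemma \ref{lem3.2} (odd girth) and Lemma \ref{lem3.3}(ii) (girth $\equiv 2 \pmod 4$) together with Theorem \ref{thm1.2} to reduce $E(\Gamma)$ to $E(G)$, then apply Theorem \ref{lem3.1}. The paper omits the details, citing precisely these results, so your write-up is just a fleshed-out version of the same argument.
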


Next we consider the unicyclic unbalanced signed  graphs with girth $g$ $\equiv$ $0$ $($mod  $4).$
By Lemma \ref{lem2.1}, we have the following observation.

\begin{lemma}\label{lem3.5}
  Let $\Gamma\in \mathbb{U}(n,g)$ be unbalanced and let $uv$ be the pendant edge of $\Gamma$ with the pendant vertex v. Then
  $$b_j(\Gamma) = b_j(\Gamma - v) + b_{j-2}(\Gamma - v - u).$$

\end{lemma}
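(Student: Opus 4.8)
The plan is to read off a coefficient relation directly from the pendant-edge recurrence in Lemma \ref{lem2.1} and then translate it through the definition of the $b_j$. First I would use that $uv$ is a pendant edge with pendant vertex $v$ to write $\phi(\Gamma,x)=x\phi(\Gamma-v,x)-\phi(\Gamma-u-v,x)$; here $\Gamma-v$ and $\Gamma-u-v$ carry the restricted signatures, and since the deleted pendant edge lies on no cycle the cyclic structure (and hence every cycle sign) is unaffected. Comparing the coefficient of $x^{n-j}$ on both sides, and noting that $\phi(\Gamma-v,x)$ has degree $n-1$ while $\phi(\Gamma-u-v,x)$ has degree $n-2$, so that the factor $x$ and the degree drop together produce an index shift, yields the clean relation $a_j(\Gamma)=a_j(\Gamma-v)-a_{j-2}(\Gamma-u-v)$ valid for every $j$.

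Next I would pass from the $a_j$ to the $b_j$. Recall that $b_{2k}=(-1)^k a_{2k}$ and $b_{2k+1}=(-1)^k a_{2k+1}$, which can be written uniformly as $b_j=(-1)^{\lfloor j/2\rfloor}a_j$. Multiplying the coefficient relation by $(-1)^{\lfloor j/2\rfloor}$ turns the first two terms into $b_j(\Gamma)$ and $b_j(\Gamma-v)$ respectively. For the last term the key observation is that $j-2$ and $j$ have the same parity, so $\lfloor (j-2)/2\rfloor=\lfloor j/2\rfloor-1$ and hence $(-1)^{\lfloor j/2\rfloor}a_{j-2}=-b_{j-2}$. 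In other words, the minus sign of the characteristic-polynomial recurrence is exactly absorbed by the one-step shift in the alternating sign convention, so that $-(-1)^{\lfloor j/2\rfloor}a_{j-2}(\Gamma-u-v)=+b_{j-2}(\Gamma-u-v)$, which gives $b_j(\Gamma)=b_j(\Gamma-v)+b_{j-2}(\Gamma-u-v)$ as claimed.

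Finally I would verify the boundary conventions $b_0=1$, $b_1=0$, and $b_j=0$ for $j<0$, so that the identity also holds for the smallest indices. I do not expect a genuine obstacle: the statement is a pure bookkeeping consequence of Lemma \ref{lem2.1}, and the only point that demands care is the sign tracking, since the two parities of $b_j$ must be handled consistently. Using the single formula $b_j=(-1)^{\lfloor j/2\rfloor}a_j$ avoids splitting into an even case and an odd case and makes the cancellation transparent. It is worth remarking that neither the unbalancedness hypothesis nor the membership $\Gamma\in\mathbb{U}(n,g)$ enters the computation itself; they merely fix the context in which this recurrence will subsequently be applied.
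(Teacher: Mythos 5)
Your proposal is correct and follows exactly the route the paper intends: the paper states this lemma as an immediate observation from the pendant-edge case of Lemma \ref{lem2.1}, and your derivation (extracting $a_j(\Gamma)=a_j(\Gamma-v)-a_{j-2}(\Gamma-u-v)$ and absorbing the minus sign via $b_j=(-1)^{\lfloor j/2\rfloor}a_j$ with $\lfloor (j-2)/2\rfloor=\lfloor j/2\rfloor-1$) simply supplies the bookkeeping the paper omits. Your closing remark that unbalancedness plays no role in the computation is also accurate.
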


The following result shows that the energy of the (signed) trees is less than $E(\mathcal{P}_n^4)$.

\begin{lemma}\label{lem3.6}
Let $T$ be any  \emph{(}signed\emph{)} tree with $n\geq 12$ vertices.   Then $T\prec \mathcal{P}_n^4.$
\end{lemma}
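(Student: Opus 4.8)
The plan is to work entirely inside the quasi-order $\preceq$ on $\Delta_n$. First I would note that both objects qualify: the tree $T$ is bipartite, and the underlying graph of $\mathcal{P}_n^4$ is a $4$-cycle with a pendant path, which is also bipartite, so both have the pairing property and by Lemma \ref{lem2.2} it makes sense to compare only their even coefficients $b_{2j}$, while by Lemma \ref{lem2.3} domination in these coefficients forces the energy inequality. For the tree, the absence of cycles means the only basic figures are matchings, so $b_{2j}(T)=m(T,j)$; and by Lemma \ref{lem2.4}(ii), $m(T,j)\le m(P_n,j)$ for every $j$. Hence it suffices to prove $m(P_n,j)\le b_{2j}(\mathcal{P}_n^4)$ for all $j$ together with one strict inequality. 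I would extract the strict one at once: $b_2(\mathcal{P}_n^4)=n$ (a unicyclic graph on $n$ vertices has $n$ edges), whereas $b_2(T)=m(T,1)=n-1$.

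The main engine is a shared recurrence. Applying Lemma \ref{lem3.5} to the pendant edge at the free end of the path gives, for $n\ge 6$, $b_{2j}(\mathcal{P}_n^4)=b_{2j}(\mathcal{P}_{n-1}^4)+b_{2j-2}(\mathcal{P}_{n-2}^4)$, since deleting the pendant vertex yields $\mathcal{P}_{n-1}^4$ and deleting it with its neighbour yields $\mathcal{P}_{n-2}^4$, both still unbalanced as they retain the negative $C_4$. This is exactly the recurrence $m(P_n,j)=m(P_{n-1},j)+m(P_{n-2},j-1)$ satisfied by path matchings (Lemma \ref{lem2.4}(i)). Consequently, if $m(P_n,j)\le b_{2j}(\mathcal{P}_n^4)$ holds for two consecutive values of $n$, it propagates to all larger $n$ by induction, and any strict inequality is preserved because the two sides are built from the same lower-order terms.

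Then I would settle the base cases $n=4,5$ by the coefficient theorem (Theorem \ref{thm1.1}). For $C_4^{\overline{\sigma}}$ the basic figures of order $4$ are its two perfect matchings together with the negative cycle itself; the negative sign of the cycle flips its contribution so that it \emph{adds} to, rather than cancels, the matching contribution, giving $b_4(C_4^{\overline{\sigma}})=4$ (indeed $\phi=(x^2-2)^2$) and $b_2=4$, which dominates $m(P_4,\cdot)=(1,3,1)$. One further application of Lemma \ref{lem3.5} produces the coefficients of $\mathcal{P}_5^4$, which dominate $m(P_5,\cdot)=(1,4,3)$. Combining the two base cases, the induction, and the strict inequality at $j=1$ gives $m(P_n,j)\le b_{2j}(\mathcal{P}_n^4)$ for all $j$ with strictness, whence $b_{2j}(T)=m(T,j)\le m(P_n,j)\le b_{2j}(\mathcal{P}_n^4)$ for all $j$ and $b_2(T)<b_2(\mathcal{P}_n^4)$, i.e. $T\prec\mathcal{P}_n^4$.

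I expect the only genuine obstacle to be the sign bookkeeping in the base case: one must verify that the negative $C_4$ contributes with the sign that reinforces the matching contribution rather than cancelling it, as this is precisely what lets the unbalanced cycle beat the tree and is the qualitative reason why $g\equiv 0\ (\mathrm{mod}\ 4)$ is the delicate case in Lemma \ref{lem3.3}. Everything else is a routine induction once the common recurrence is established; in fact the hypothesis $n\ge 12$ is more generous than the argument requires, so I would simply check the base cases and let the induction run.
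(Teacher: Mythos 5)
Your proof is correct, and it shares the paper's overall skeleton --- reduce $T$ to $P_n$ via Lemma \ref{lem2.4}(ii) and then show $P_n \prec \mathcal{P}_n^4$ --- but it establishes the second, crucial comparison by a genuinely different mechanism. The paper deletes a cycle edge of $\mathcal{P}_n^4$ chosen so that the edge-deleted graph is exactly $P_n$, and applies the Gill--Acharya recurrence (Lemma \ref{lem2.1}): because the cycle is negative, the cycle correction term enters with a plus sign, and one reads off in a single step, for each $n$ separately,
$b_{2i}(\mathcal{P}_n^4) = b_{2i}(P_n) + b_{2i-2}(P_{n-4}\cup P_2) + b_{2i-4}(P_{n-4}) > b_{2i}(P_n)$.
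You instead run an induction on $n$ through the shared pendant-edge recurrence (Lemma \ref{lem3.5} matched against Lemma \ref{lem2.4}(i)), pushing all of the sign bookkeeping for the negative $C_4$ into the base case $\phi(C_4^{\overline{\sigma}})=(x^2-2)^2$, i.e.\ $b_2=b_4=4$, and then to $n=5$. Both routes are sound. The paper's identity is shorter and exhibits the surplus of $\mathcal{P}_n^4$ over $P_n$ explicitly in every coefficient; your induction requires two base-case computations but isolates exactly where the unbalance is used and where strictness arises (at $b_2$, since $b_2(\mathcal{P}_n^4)=n>n-1=b_2(T)$). Your side remarks --- that $b_{2j}(T)=m(T,j)$ for a tree, that both graphs are bipartite and hence lie in $\Delta_n$, and that the hypothesis $n\ge 12$ is more generous than this argument needs --- are all accurate.
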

\begin{proof}
By Lemma \ref{lem2.4}, we can get $T\prec P_n.$ Choosing an edge $uv\in\mathcal{P}_n^4$ as depicted in Figure \ref{Fig-1}, by Lemma \ref{lem2.1}, we have
\begin{align*}
  b_{2i}(\mathcal{P}_{n}^4) &= (-1)^ia_{2i}(\mathcal{P}_{n}^4)\\ &=(-1)^ia_{2i}(\mathcal{P}_{n}^4-uv)-(-1)^ia_{2i-2}(\mathcal{P}_{n}^4-u-v)+(-1)^ia_{2i-4}(\mathcal{P}_{n}^4-C_4^{\overline{\sigma}}) \\
  & =b_{2i}(P_n)+b_{2i-2}(P_{n-4}\cup P_2)+b_{2i-4}(P_{n-4})\\
  & >b_{2i}(P_n).
\end{align*}
By Lemma \ref{lem2.3}, we have $T\prec P_n\prec \mathcal{P}_{n}^4.$
\end{proof}

The following result shows that among all unbalanced unicyclic signed  graphs with girth $g$ $\equiv$ $0$ $($mod  $4)$ in $\mathbb{U}(n, g),$ $\mathcal{P}_n^g$ has maximal energy.

\begin{lemma}\label{lem3.7}
Let $\Gamma\in \mathbb{U}(n, g)$ be unbalanced and  $g$ $\equiv$ $0$ $($mod  $4).$ If $\Gamma \nsim \mathcal{P}_n^g,$ then $\Gamma \prec \mathcal{P}_n^g.$
\end{lemma}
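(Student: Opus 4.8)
The plan is to compare $\Gamma$ and $\mathcal{P}_n^g$ through the quasi-order $\preceq$ of Lemma \ref{lem2.3}. Since $g$ is even and $\Gamma$ is unicyclic, its only cycle has even length, so $\Gamma$ (and likewise $\mathcal{P}_n^g$) is bipartite and hence lies in $\Delta_n$; thus $\preceq$ applies and it suffices to show $b_{2j}(\Gamma)\le b_{2j}(\mathcal{P}_n^g)$ for every $j$, with strict inequality for at least one $j$. First I would make the coefficients explicit. The only basic figures of $\Gamma$ counted by Theorem \ref{thm1.1} are matchings and matchings together with the unique negative cycle $C_g$; using $\sigma(C_g)=-1$ and $g\equiv 0\pmod 4$ (so that $g/2$ is even and the two sign factors combine to $+1$), a short computation gives
\[
 b_{2j}(\Gamma)=m(\Gamma,j)+2\,m\!\left(\Gamma-C_g,\ j-\tfrac{g}{2}\right),
\]
and the identical formula for $\mathcal{P}_n^g$. (As a sanity check, for $g\equiv 2\pmod 4$ the second term would carry a minus sign, matching Lemma \ref{lem3.3}.)

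Because both terms on the right are non-negative, the comparison splits into two matching inequalities, to be established for all $j$: \textbf{(a)} $m(\Gamma,j)\le m(\mathcal{P}_n^g,j)$, and \textbf{(b)} $m(\Gamma-C_g,\,j-\tfrac{g}{2})\le m(\mathcal{P}_n^g-C_g,\,j-\tfrac{g}{2})$. Inequality (b) is immediate: $\Gamma-C_g$ is a forest on $n-g$ vertices while $\mathcal{P}_n^g-C_g=P_{n-g}$, so Lemma \ref{lem2.4}(ii) applies. The substance of the argument is therefore (a), namely that among all graphs in $\mathbb{U}(n,g)$ the underlying graph of $\mathcal{P}_n^g$ (the cycle $C_g$ with a single pendant path $P_{n-g}$) has the maximum number of $k$-matchings for every $k$; recall $m$ is signature-independent, so this is a purely unsigned statement.

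For (a) I would proceed by a sequence of matching-nondecreasing transformations turning $\Gamma$ into $\mathcal{P}_n^g$. Writing $\Gamma$ as the cycle $C_g$ with rooted trees attached at its vertices, I would first invoke the standard pendant-path transfer lemma — attaching, at a fixed vertex, two pendant paths of lengths $s\ge t\ge 1$ never yields more $k$-matchings than attaching paths of lengths $s+1$ and $t-1$ — to replace each attached tree by a single pendant path, and then to coalesce several pendant paths at a common cycle vertex into one. The delicate point, and the main obstacle, is consolidating branches that sit at \emph{different} cycle vertices into a single vertex: the same-vertex lemma does not apply directly, so this requires a separate branch-transfer lemma (moving an attached path from one cycle vertex to another), and one must verify that it is matching-nondecreasing and, crucially, \emph{strict} whenever the graph genuinely changes. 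After all transformations the terminal graph is $C_g$ with one pendant $P_{n-g}$, which establishes (a) and identifies this graph as the unique matching-maximizer in $\mathbb{U}(n,g)$.

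Finally I would settle strictness and conclude. A connected unicyclic graph has cycle rank one, so it carries exactly two switching classes (balanced and unbalanced); hence any two unbalanced signings of a fixed underlying unicyclic graph are switching equivalent. Therefore $\Gamma\nsim\mathcal{P}_n^g$ together with unbalancedness of $\Gamma$ forces the underlying graph of $\Gamma$ to differ from that of $\mathcal{P}_n^g$. Since the latter is the unique matching-maximizer in $\mathbb{U}(n,g)$, at least one of (a), (b) is strict for some $j$, giving $b_{2j}(\Gamma)<b_{2j}(\mathcal{P}_n^g)$ there while $b_{2j}(\Gamma)\le b_{2j}(\mathcal{P}_n^g)$ for all $j$. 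Thus $\Gamma\prec\mathcal{P}_n^g$, and by Lemma \ref{lem2.3} this also yields $E(\Gamma)<E(\mathcal{P}_n^g)$.
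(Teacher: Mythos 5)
Your reduction is sound as far as it goes, and it is a genuinely different route from the paper's. The coefficient formula $b_{2j}(\Gamma)=m(\Gamma,j)+2\,m(\Gamma-C_g,\,j-g/2)$ for an unbalanced unicyclic signed graph with $g\equiv 0 \pmod 4$ is correct by Theorem \ref{thm1.1}, part (b) of your splitting follows at once from Lemma \ref{lem2.4}(ii), and your observation that a connected unicyclic graph has exactly two switching classes (so $\Gamma\nsim\mathcal{P}_n^g$ forces the underlying graphs to differ) is a clean way to localize where strictness must come from. The paper instead argues by induction on $n-g$: it picks a pendant edge $uv$ with $\Gamma-v\nsim\mathcal{P}_{n-1}^g$, applies the recursion $b_i(\Gamma)=b_i(\Gamma-v)+b_{i-2}(\Gamma-v-u)$ of Lemma \ref{lem3.5} to both $\Gamma$ and $\mathcal{P}_n^g$, and closes the induction using either the hypothesis (when $\Gamma-v-u$ still contains the cycle) or the tree comparison $\Gamma-v-u\preceq P_{n-2}\prec\mathcal{P}_{n-2}^g$ (when it does not). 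That induction never has to compare two unicyclic graphs with branches hanging at different cycle vertices, which is exactly the comparison your route must confront.

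And that is where your proposal has a genuine gap: step (a), the claim that the underlying graph of $\mathcal{P}_n^g$ is the unique maximizer of $m(\cdot,k)$ over $\mathbb{U}(n,g)$, is the entire substance of the lemma in your formulation, and you do not prove it. You correctly identify that the same-vertex pendant-path transfer lemma does not cover moving a branch from one cycle vertex to another, but you then simply posit "a separate branch-transfer lemma" together with the monotonicity and strictness properties it would need. Such a lemma is true and is essentially what underlies the unsigned results of Hou--Gutman--Woo cited as Theorem \ref{lem3.1}, but it is not a one-line local exchange: the standard proofs establish $m(G,k)\le m(P_n^g,k)$ by induction on $n$ via $m(G,k)=m(G-v,k)+m(G-v-u,k-1)$ rather than by a sequence of elementary transformations, precisely because a naive "slide the branch to a neighbouring cycle vertex" move is not obviously matching-nondecreasing in all configurations. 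Moreover, your strictness argument depends on the \emph{uniqueness} of the maximizer, which is part of the same unproven step (note that part (b) cannot rescue strictness in general, e.g.\ when the pendant path of $\Gamma$ is attached to the cycle at an internal path vertex, since then $\Gamma-C_g=P_{n-g}$ as well). To complete the proof along your lines you must either state and prove the branch-transfer lemma with its strictness clause, or replace step (a) by the pendant-edge induction the paper uses.
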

\begin{proof}
 We prove the Lemma \ref{lem3.7} by induction on $n-g$.

In a trivial manner the Lemma \ref{lem3.7} holds for $n-g= 0$ and $n-g=1,$
because then $\mathbb{U}(n, g)$ has only a single element. Let $p \geq 2$ and suppose the result
is true for $n-g<p.$ Now we consider $n-g=p.$ Since $\Gamma$ is unicyclic and not a
cycle, for $n > p$, $\Gamma$ must have a pendant edge $uv$ with pendant vertex $v.$ As $\Gamma\nsim \mathcal{P}_n^g $ and $n\geq g + 2,$ we may choose a pendant edge $uv$ of $\Gamma$ such that $\Gamma-v\nsim \mathcal{P}_{n-1}^g. $ By Lemma \ref{lem3.5}, we have
\begin{equation}\label{eq1}
b_i(\Gamma)=b_{i}(\Gamma-v)+b_{i-2}(\Gamma-v-u),
\end{equation}
\begin{equation}\label{eq2}
b_i(\mathcal{P}_n^g)=b_{i}(\mathcal{P}_{n-1}^g)+b_{i-2}(\mathcal{P}_{n-2}^g).
\end{equation}
By the induction assumption, we have $\Gamma-v\prec \mathcal{P}_{n-1}^g. $

If $\Gamma-v-u$ contains the cycle $(C_g,\overline{\sigma})$, then by the induction hypoyhesis, we have
$\Gamma-v-u\prec \mathcal{P}_{n-2}^g.$ (It is easy to show that this relation holds also if $\Gamma-v-u$
is not connected.) Thus Lemma \ref{lem3.7} follows from Eqs. (\ref{eq1}), (\ref{eq2})  and the inductive
hypothesis.

If $\Gamma-v-u$ does not contain the cycle $(C_g,\overline{\sigma})$, then it is acyclic. Then $b_{i-2}(\Gamma-v-u) = 0$ when $i$ is odd whereas for $i = 2k,$
by Lemma \ref{lem3.6}, we have $\Gamma-v-u\preceq P_{n-2}$ and

\begin{equation}\label{eq3}
\begin{split}
  b_{2i}(\mathcal{P}_{n-2}^g) &= (-1)^ia_{2i}(\mathcal{P}_{n-2}^g)\\ &=(-1)^ia_{2i}(\mathcal{P}_{n-2}^g-uv)-(-1)^ia_{2i-2}(\mathcal{P}_{n-2}^g-u-v)+(-1)^ia_{2i-g}(\mathcal{P}_{n-2}^g-C_g^{\overline{\sigma}}) \\
  & =b_{2i}(P_{n-2})+b_{2i-2}(P_{n-g-2}\cup P_{g-2})+b_{2i-g}(P_{n-g-2})\\
  & >b_{2i}(P_{n-2}).
\end{split}
\end{equation}
Then $\Gamma-v-u\preceq P_{n-2}\prec \mathcal{P}_{n-2}^g$ and
Lemma \ref{lem3.7} now follows from Eqs. (\ref{eq1}), (\ref{eq2}), (\ref{eq3}) and the inductive assumption.
\end{proof}

\begin{lemma}\label{lem3.8}
  Let $n\geq 4$ and n $\equiv$ $0$ $($mod  $4).$ Then $(C_n,{\overline{\sigma}})\prec \mathcal{P}_{n}^4.$
\end{lemma}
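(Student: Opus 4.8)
The statement to prove is that $(C_n,\overline{\sigma})\prec \mathcal{P}_n^4$ whenever $n\equiv 0\pmod 4$. Since both graphs are unbalanced unicyclic graphs lying in $\Delta_n$ (both are bipartite, being built from even cycles and paths), the quasi-order $\preceq$ is well-defined and by Lemma~\ref{lem2.3} it suffices to compare the coefficients $b_{2i}$. My plan is to express $b_{2i}$ of each graph in terms of matching-type quantities of paths and cycles using the edge-deletion recurrence of Lemma~\ref{lem2.1} (equivalently Lemma~\ref{lem3.5}), and then reduce everything to comparisons among disjoint unions of paths so that Lemma~\ref{lem2.5} can be applied.

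First I would compute $b_{2i}(\mathcal{P}_n^4)$ by choosing the edge $uv$ joining the cycle $C_4^{\overline\sigma}$ to the attached path, exactly as in the proof of Lemma~\ref{lem3.6}. This yields
\begin{equation*}
b_{2i}(\mathcal{P}_n^4)=b_{2i}(P_n)+b_{2i-2}(P_{n-4}\cup P_2)+b_{2i-4}(P_{n-4}).
\end{equation*}
Next I would compute $b_{2i}$ of the cycle $(C_n,\overline\sigma)$. For an unbalanced even cycle the coefficient theorem (Theorem~\ref{thm1.1}) gives a clean formula: the basic figures contributing to $a_{2i}$ are either $i$-matchings of $C_n$ or the whole cycle itself when $2i=n$, and because the cycle is negative the sign contribution of the length-$n$ term flips relative to the balanced case. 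Concretely I expect to obtain something of the form $b_{2i}((C_n,\overline\sigma))=b_{2i}(P_n)+b_{2i-2}(P_{n-2})+(\text{a boundary term at }2i=n)$, where the last term accounts for the single Hamiltonian cycle. I would verify this either directly from Theorem~\ref{thm1.1} or by applying Lemma~\ref{lem2.1} to one edge of the cycle, splitting off $\phi(P_n)$, $\phi(P_{n-2})$, and the term $-2\,\mathrm{sgn}(C_n)\phi(\emptyset)$.

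With both expansions in hand, the comparison $b_{2i}((C_n,\overline\sigma))\le b_{2i}(\mathcal{P}_n^4)$ reduces to an inequality purely among $b_{2i}$ of path-unions, namely a statement of the type $b_{2i-2}(P_{n-2})\le b_{2i-2}(P_{n-4}\cup P_2)+b_{2i-4}(P_{n-4})+(\text{cycle terms})$, which by $b_{2i}(P)=m(P,i)$ becomes a matching-count inequality decidable by Lemma~\ref{lem2.5} together with the recurrence $m(G,k)=m(G-e,k)+m(G-u-v,k-1)$ of Lemma~\ref{lem2.4}(i). I would also check strictness: it is enough to exhibit one index $i$ where the inequality is strict, and the term $b_{2i-4}(P_{n-4})>0$ (or the $2i=n$ boundary comparison) should supply it.

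The main obstacle I anticipate is the Hamiltonian boundary term at $2i=n$: for $(C_n,\overline\sigma)$ the length-$n$ cyclic basic figure contributes with a sign governed by $\sigma(C_n)=-$ and the factor $2^{|c(L)|}=2$, so I must track this term carefully and confirm that it does not reverse the inequality at the top coefficient. Because $n\equiv 0\pmod 4$, the parity $(-1)^{n/2}=+1$, and I expect the negative cycle's top coefficient to be $b_n((C_n,\overline\sigma))=b_n(P_n)+2$ while $b_n(\mathcal{P}_n^4)$ picks up strictly larger contributions from the attached path together with its own cyclic term; reconciling these two top-order contributions, and making sure the inequality is non-strict (indeed strict somewhere) across \emph{all} $i$, is where the real work lies. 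Once that single delicate coefficient is handled, the remaining comparisons are routine applications of Lemma~\ref{lem2.5}.
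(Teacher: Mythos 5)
Your plan is essentially the paper's proof: expand $b_{2i}$ of both signed graphs via Lemma~\ref{lem2.1} into path terms, rewrite $b_{2i-2}(P_{n-2})=b_{2i-2}(P_{n-4}\cup P_2)+b_{2i-4}(P_{n-5}\cup K_1)$ using the matching recurrence, compare with Lemma~\ref{lem2.5} (since $P_{n-5}\cup K_1\prec P_{n-4}$), and treat the coefficient at $2i=n$ separately. One correction to your anticipated boundary analysis: for $2i<n$ the Hamiltonian cycle term $-2\,\mathrm{sgn}(C_n)\phi(\emptyset)$ is a constant and contributes nothing, and at $2i=n$ the two top coefficients are actually equal, $b_n((C_n,\overline{\sigma}))=b_n(\mathcal{P}_n^4)=4$, so the strictness needed for $\prec$ comes from the indices $2i<n$ rather than from the top coefficient as you expected.
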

\begin{proof}
  By Theorem \ref{thm1.1}, we have $b_n(C_n,{\overline{\sigma}})=b_n(\mathcal{P}_{n}^4)=4.$

Since both of $(C_n,{\overline{\sigma}})$ and $\mathcal{P}_{n}^4$  are unbalanced unicyclic signed  graphs, we choose one negative edge $uv$ in the cycle, and the rest is positive.  By Lemma \ref{lem2.1}, for $2i<n$, we have
\begin{align*}
  b_{2i}(C_n,{\overline{\sigma}}) & =(-1)^ia_{2i}(C_n,{\overline{\sigma}})=(-1)^ia_{2i}((C_n,{\overline{\sigma}})-uv)-(-1)^ia_{2i-2}((C_n,{\overline{\sigma}})-u-v) \\
  & =b_{2i}(P_n)+b_{2i-2}(P_{n-2})\\
  & =b_{2i}(P_n)+b_{2i-2}(P_{n-4}\cup P_2)+b_{2i-4}(P_{n-5}\cup K_1)\\
 & <b_{2i}(P_n)+b_{2i-2}(P_{n-4}\cup P_2)+b_{2i-4}(P_{n-4}) ~~~~ \mbox{(by Lemma  \ref{lem2.5})}\\
  & =  b_{2i}(\mathcal{P}_{n}^4),
\end{align*}
Then  $b_{2i}(C_n,{\overline{\sigma}})\leq b_{2i}(\mathcal{P}_{n}^4),$ for $i=1,2,\dots,\lfloor\frac{n}{2} \rfloor.$ By  Lemma \ref{lem2.3}, we have $(C_n,{\overline{\sigma}})\prec \mathcal{P}_{n}^4.$
\end{proof}

\begin{lemma}\label{lem3.9}
Let $g\geq8$ and g $\equiv$ $0$ $($mod  $4).$ Then $\mathcal{P}_{g+1}^g \prec \mathcal{P}_{g+1}^4$ and $\mathcal{P}_{g+2}^g \prec \mathcal{P}_{g+2}^4.$
\end{lemma}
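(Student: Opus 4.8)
The plan is to invoke the energy monotonicity of Lemma~\ref{lem2.3}. In each of the two claims the underlying graphs are bipartite (every cycle has even length, $\equiv 0 \pmod 4$ or equal to $4$), so all four signed graphs lie in $\Delta_n$; hence it suffices to show $b_{2i}(\mathcal{P}_n^g) \le b_{2i}(\mathcal{P}_n^4)$ for every $i$ with strict inequality for at least one $i$, which yields $\mathcal{P}_n^g \prec \mathcal{P}_n^4$ and then $E(\mathcal{P}_n^g) < E(\mathcal{P}_n^4)$.

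First I would record one matching expansion valid for both girths. Expanding $\mathcal{P}_m^g$ by Lemma~\ref{lem2.1} along the cycle edge $uv$ incident with the attachment vertex, so that $\mathcal{P}_m^g - uv = P_m$, $\mathcal{P}_m^g - u - v = P_{g-2}\cup P_{m-g}$ and $\mathcal{P}_m^g - C_g = P_{m-g}$, and using $\mathrm{sgn}(C_g) = -1$ together with $b_{2i}(F) = m(F,i)$ for a forest $F$, gives
$$ b_{2i}(\mathcal{P}_m^g) = m(P_m, i) + m(P_{g-2}\cup P_{m-g}, i-1) + 2\, m(P_{m-g}, i - g/2), $$
where the last term is present only for $2i \ge g$.

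For $n = g+1$ I substitute $m = g+1$, so that $P_{m-g} = P_1$ for the girth-$g$ graph while $P_{m-4} = P_{g-3}$ for the girth-$4$ graph. Using $m(P_1, k) = 0$ for $k \ge 1$ and $m(P_{g-2} \cup P_1, i-1) = m(P_{g-2}, i-1)$, I subtract the two expansions; applying Lemma~\ref{lem2.4}(i) to rewrite $m(P_{g-2}, \cdot)$ and $m(P_2 \cup P_{g-3}, \cdot)$ through $P_{g-3}$ and $P_{g-4}$ collapses the difference, for $i < g/2$, to $3\,m(P_{g-3}, i-2) - m(P_{g-4}, i-2)$. This is nonnegative by the subgraph monotonicity of Lemma~\ref{lem2.4}(iii) and already strictly positive at $i = 2$, where it equals $2$. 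At the single index $i = g/2$, where the cycle contributes, the top matching numbers satisfy $m(P_{g-3}, g/2 - 2) = m(P_{g-4}, g/2 - 2) = 1$, and the extra $2$ coming from the negative cycle cancels exactly, so the difference is $0$ there. Hence $b_{2i}(\mathcal{P}_{g+1}^g) \le b_{2i}(\mathcal{P}_{g+1}^4)$ with strict inequality at $i = 2$.

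For $n = g+2$ the computation is cleaner: with $m = g+2$ the middle terms are $m(P_{g-2}\cup P_2, i-1)$ for both graphs and cancel identically, so the whole difference reduces to $2\,m(P_{g-2}, i-2)$ diminished only by the cycle contributions at $i = g/2$ and $i = g/2 + 1$. This stays nonnegative everywhere, is strictly positive for small $i$ (equal to $2$ at $i = 2$), and at $i = g/2 + 1$ the value $2\,m(P_{g-2}, g/2 - 1) = 2$ just absorbs the cycle term. In both claims we obtain $\mathcal{P}_n^g \prec \mathcal{P}_n^4$ and conclude by Lemma~\ref{lem2.3}. The main obstacle is the bookkeeping of the cycle term at the top matching indices $i = g/2$ (and $g/2 + 1$), where the two girths genuinely differ and the naive term-by-term comparison can degenerate to equality; the hypothesis $g \ge 8$ guarantees that the relevant matching sizes $g/2 - 2$ lie within range, so that the remaining indices still deliver the strict inequality.
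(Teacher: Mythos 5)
Your proof follows essentially the same route as the paper: both expand via Lemma~\ref{lem2.1} by deleting the cycle edge at the degree-3 vertex, reduce every coefficient to matching numbers of paths, and compare term by term (the paper invokes Lemma~\ref{lem2.5} where you use Lemma~\ref{lem2.4}(i); you also correctly carry the factor $2$ on the cycle term that the paper's displayed formulas drop). One arithmetic slip: $m(P_{g-3},\,g/2-2)=\binom{g/2-1}{g/2-2}=g/2-1$, not $1$, so at $i=g/2$ in the $n=g+1$ case the difference is $3(g/2-1)-1-2=3g/2-6>0$ rather than $0$ --- this only strengthens the inequality, so your argument stands.
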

\begin{proof} Choosing an negative edge $uv\in C_g^{\overline{\sigma}}$ such that $u$ is the vertex of degree 3 of $C_g^{\overline{\sigma}}$. Deleting it, and applying Lemma \ref{lem2.1}, we have
\begin{align*}
  b_{2i}(\mathcal{P}_{g+1}^4) &= b_{2i}(P_{g+1})+b_{2i-2}(P_{g-3}\cup P_2)+b_{2i-4}(P_{g-3}). \\
b_{2i}(\mathcal{P}_{g+1}^g) &= b_{2i}(P_{g+1})+b_{2i-2}(P_{g-2}\cup K_1)+b_{2i-g}(K_{1}). \\
 b_{2i}(\mathcal{P}_{g+2}^4) &= b_{2i}(P_{g+2})+b_{2i-2}(P_{g-2}\cup P_2)+b_{2i-4}(P_{g-2}). \\
 b_{2i}(\mathcal{P}_{g+2}^g) &=b_{2i}(P_{g+2})+b_{2i-2}(P_{g-2}\cup P_2)+b_{2i-g}(P_2).
\end{align*}

By Lemma \ref{lem2.5}, we have $P_{g-2}\cup K_1\prec P_{g-3}\cup P_2$ and $P_2 \prec P_{g-2}.$ Then
$b_{2i}(\mathcal{P}_{g+1}^4)> b_{2i}(\mathcal{P}_{g+1}^g)$ and $ b_{2i}(\mathcal{P}_{g+2}^4)>  b_{2i}(\mathcal{P}_{g+2}^g).$
\end{proof}

\begin{lemma}\label{lem3.10}
Let $g\geq8$ and g $\equiv$ $0$ $($mod  $4).$ Then $\mathcal{P}_{n}^g \prec \mathcal{P}_{n}^4.$
\end{lemma}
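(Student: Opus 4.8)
The plan is to prove $\mathcal{P}_n^g \prec \mathcal{P}_n^4$ for all $n \geq g$ by induction on $n-g$, using the preceding three lemmas to anchor the base cases. Lemma \ref{lem3.8} handles the comparison at the level of cycles (the case $n=g$, via the chain $(C_g,\overline{\sigma}) \prec \mathcal{P}_g^4$), while Lemma \ref{lem3.9} supplies exactly the two successive base cases $n = g+1$ and $n = g+2$ that a two-step recurrence in the $b_{2i}$ coefficients will require. So I expect the induction to start cleanly from these results.

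For the inductive step I would fix a pendant edge $uv$ at the path-end of each of $\mathcal{P}_n^g$ and $\mathcal{P}_n^4$, with pendant vertex $v$, and apply Lemma \ref{lem3.5} to both graphs to obtain
\begin{align*}
b_{2i}(\mathcal{P}_n^g) &= b_{2i}(\mathcal{P}_{n-1}^g) + b_{2i-2}(\mathcal{P}_{n-2}^g),\\
b_{2i}(\mathcal{P}_n^4) &= b_{2i}(\mathcal{P}_{n-1}^4) + b_{2i-2}(\mathcal{P}_{n-2}^4).
\end{align*}
Provided $n-1 \geq g+1$ and $n-2 \geq g$, so that both reduced graphs are still of the form $\mathcal{P}_\cdot^\cdot$, the inductive hypothesis gives $\mathcal{P}_{n-1}^g \preceq \mathcal{P}_{n-1}^4$ and $\mathcal{P}_{n-2}^g \preceq \mathcal{P}_{n-2}^4$ termwise (with Lemma \ref{lem3.8} covering the boundary instance where $n-2 = g$). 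Since the $b_{2i}$ are nonnegative and the recurrence is monotone in each summand, adding the two inequalities yields $b_{2i}(\mathcal{P}_n^g) \leq b_{2i}(\mathcal{P}_n^4)$ for every $i$, and strictness is inherited from any index at which the inductive comparison is already strict. By Lemma \ref{lem2.3} this promotes the coefficient comparison to the energy inequality and hence $\mathcal{P}_n^g \prec \mathcal{P}_n^4$.

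The main obstacle is bookkeeping at the boundary rather than any deep estimate: I must ensure that the pendant-edge deletions keep me inside the family of graphs already compared, i.e. that deleting $v$ from $\mathcal{P}_n^g$ really produces $\mathcal{P}_{n-1}^g$ and deleting both $u$ and $v$ produces $\mathcal{P}_{n-2}^g$, which requires the pendant edge to sit at the free end of the attached path and requires $n$ to be large enough that $\mathcal{P}_{n-1}^g$ and $\mathcal{P}_{n-2}^g$ still have a nonempty tail (this is where $n \geq g+2$ and the two base cases $g+1, g+2$ are indispensable). One has to handle the parity bookkeeping of $b_{2i-2}$ carefully when $2i-2$ drops below the relevant thresholds, but in those ranges the coefficients either vanish or reduce to path comparisons already established in Lemma \ref{lem2.5}. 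Once the base cases and the index ranges are pinned down, the step is a routine termwise addition, so I would devote the write-up mainly to verifying the reductions $\mathcal{P}_n^g - v \sim \mathcal{P}_{n-1}^g$ and $\mathcal{P}_n^g - u - v \sim \mathcal{P}_{n-2}^g$ and to invoking Lemmas \ref{lem3.8} and \ref{lem3.9} at the correct smallest values of $n$.
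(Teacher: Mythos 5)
Your proposal is correct and follows essentially the same route as the paper: base cases $n=g+1$ and $n=g+2$ from Lemma \ref{lem3.9}, the pendant-edge recurrence from Lemma \ref{lem3.5} applied to both $\mathcal{P}_n^g$ and $\mathcal{P}_n^4$, and termwise addition of the inductive inequalities. Your extra attention to the boundary bookkeeping (and the remark that Lemma \ref{lem3.8} covers $n=g$) only makes explicit what the paper leaves implicit.
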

\begin{proof} We prove the result by induction on $n$. By Lemma \ref{lem3.9} the statement
is true for $n = g + 1$ and $n = g + 2$. Suppose that $n > g + 2$ and that Lemma \ref{lem3.10}
holds for $n -1$ and $n- 2$. By Lemma \ref{lem3.5}, we have
\begin{align*}
  b_{2k}(\mathcal{P}_{n}^g) &= b_{2k}(\mathcal{P}_{n-1}^g)+b_{2k-2}(\mathcal{P}_{n-2}^g),\\
 b_{2k}(\mathcal{P}_{n}^4) &= b_{2k}(\mathcal{P}_{n-1}^4)+b_{2k-2}(\mathcal{P}_{n-2}^4)
\end{align*}
\end{proof}

Next we determine unicyclic signed  graphs with maximal energy.

\begin{lemma}\label{lem3.11}
Let $n\geq 6$. Then  $P_n^6 \prec \mathcal{P}_{n}^4.$
\end{lemma}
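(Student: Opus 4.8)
The plan is to compare the two competitors through the quasi-order $\preceq$ of Lemma \ref{lem2.3}. Both $P_n^6$ and $\mathcal{P}_n^4$ are bipartite signed graphs — their underlying graphs are an even cycle with a pendant path — so both lie in $\Delta_n$, their characteristic polynomials have the alternating form of Lemma \ref{lem2.2}, and it suffices to show $b_{2i}(P_n^6)\le b_{2i}(\mathcal{P}_n^4)$ for every $i$ with strict inequality for at least one $i$. To produce workable expressions for these coefficients I would proceed exactly as in Lemma \ref{lem3.6}: delete a cycle edge $uv$ incident with the vertex where the pendant path is attached and apply Lemma \ref{lem2.1}. For $\mathcal{P}_n^4$ this opens $C_4^{\overline{\sigma}}$ into $P_n$, leaves $P_2\cup P_{n-4}$ after deleting $u,v$, and leaves $P_{n-4}$ after deleting the whole (negative) cycle; for $P_n^6$ the same move opens $C_6^+$ into $P_n$, leaves $P_4\cup P_{n-6}$, and leaves $P_{n-6}$.

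Since forests satisfy $b_{2k}(\cdot)=m(\cdot,k)$ and both cycle terms enter with a positive sign after the $(-1)^i$ normalization (the factor $2$ being the $2^{|c(L)|}$ of Theorem \ref{thm1.1}), this yields
\begin{align*}
b_{2i}(\mathcal{P}_n^4) &= b_{2i}(P_n)+b_{2i-2}(P_2\cup P_{n-4})+2\,b_{2i-4}(P_{n-4}),\\
b_{2i}(P_n^6) &= b_{2i}(P_n)+b_{2i-2}(P_4\cup P_{n-6})+2\,b_{2i-6}(P_{n-6}).
\end{align*}
The leading terms $b_{2i}(P_n)$ agree, so the comparison reduces to the two residual terms. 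For the middle term, note $P_2\cup P_{n-4}$ and $P_4\cup P_{n-6}$ are both unions on $n-2$ vertices of the form $P_{2j}\cup P_{(n-2)-2j}$; Lemma \ref{lem2.5} (applied within order $n-2$) gives $P_2\cup P_{n-4}\succ P_4\cup P_{n-6}$, hence $b_{2i-2}(P_2\cup P_{n-4})\ge b_{2i-2}(P_4\cup P_{n-6})$ for all $i$.

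The delicate point — and the main obstacle — is the last term: because the two girths differ, we must compare $b_{2i-4}(P_{n-4})$ with $b_{2i-6}(P_{n-6})$, which differ in \emph{both} order and degree, so Lemma \ref{lem2.5} does not apply directly. The key observation is the matching recurrence of Lemma \ref{lem2.4}(i) on a terminal edge of $P_{n-4}$, namely $m(P_{n-4},i-2)=m(P_{n-5},i-2)+m(P_{n-6},i-3)$, which gives $b_{2i-4}(P_{n-4})-b_{2i-6}(P_{n-6})=m(P_{n-5},i-2)\ge 0$. Combining the two estimates shows $b_{2i}(P_n^6)\le b_{2i}(\mathcal{P}_n^4)$ for every $i$, and taking $i=2$ (where $m(P_{n-5},0)=1$ forces $b_4(P_n^6)<b_4(\mathcal{P}_n^4)$, e.g.\ $9<10$ at $n=6$) makes the inequality strict. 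Therefore $P_n^6\prec\mathcal{P}_n^4$, and by Lemma \ref{lem2.3} the energies satisfy $E(P_n^6)<E(\mathcal{P}_n^4)$. One should verify that the small cases are covered by reading $P_{n-6}$ as the empty/one-vertex graph with $m(P_0,0)=1$, so the argument runs uniformly for all $n\ge 6$.
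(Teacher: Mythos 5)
Your route is genuinely different from the paper's: the paper proves this lemma by induction on $n$ via the pendant-edge recurrence of Lemma \ref{lem3.5}, checking $n=6,7$ by explicit characteristic polynomials, whereas you compare coefficients directly by opening both cycles with Lemma \ref{lem2.1}. Your two expansions
$b_{2i}(\mathcal{P}_n^4)=b_{2i}(P_n)+b_{2i-2}(P_2\cup P_{n-4})+2b_{2i-4}(P_{n-4})$ and
$b_{2i}(P_n^6)=b_{2i}(P_n)+b_{2i-2}(P_4\cup P_{n-6})+2b_{2i-6}(P_{n-6})$ are correct (including the factor $2$ and the sign flip coming from $\sigma(C_4)=-$ versus $\sigma(C_6)=+$), and the telescoping step $m(P_{n-4},i-2)-m(P_{n-6},i-3)=m(P_{n-5},i-2)\ge 0$ is a clean way to handle the mismatch in both order and degree. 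For $n\ge 7$ this gives a complete, non-inductive proof, which is arguably more informative than the paper's argument.

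However, there is a genuine gap exactly where you claim the argument "runs uniformly": at $n=6$. There $P_4\cup P_{n-6}=P_4\cup P_0=P_4=P_{n-2}$, which sits at the \emph{top} of the chain in Lemma \ref{lem2.5}, so the asserted inequality $P_2\cup P_{n-4}\succ P_4\cup P_{n-6}$ degenerates to the false statement $P_2\cup P_2\succ P_4$; in fact $b_2(P_2\cup P_2)=2<3=b_2(P_4)$, so your middle-term comparison reverses at $i=2$. The lemma is still true for $n=6$ (one has $b_4(\mathcal{P}_6^4)=6+2+2=10>9=6+3+0=b_4(C_6^+)$, the loss of $1$ in the middle term being outweighed by the gain of $2$ in the cycle term), but this requires either a combined estimate of the two residual terms or a separate direct check of $n=6$ --- which is precisely what the paper does as a base case. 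You should excise the claim of uniformity and treat $n=6$ explicitly; with that repair the proof is correct.
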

\begin{proof} We verify Lemma \ref{lem3.11} by induction on $n$. By direct calculation we check that
the Lemma \ref{lem3.11} holds for $n = 6$ and $n = 7$. Indeed, the respective characteristic polynomials are following:
$$   \phi(P_6^6, x)=x^6-6x^4 + 9x^2-4, \quad  \phi(P_7^6, x)=x^7-7x^5 + 13x^3-7x, $$
$$  \phi(\mathcal{P}_6^4, x)=x^6-6x^4 + 10x^2-4, \quad  \phi(\mathcal{P}_7^4, x)=x^7+7x^5 -15x^3-10x.$$

Now suppose that $n\geq 8$ and that the statement of the Lemma \ref{lem3.11} is true for the signed  graphs
with $n-1$ and $n-2$ vertices, i.e., that $b_{2k}(P_{n-1}^6)\leq b_{2k}(\mathcal{P}_{n-1}^4)$,
$b_{2k-2}(P_{n-2}^6)\leq b_{2k-2}(\mathcal{P}_{n-2}^4).$ By Lemma \ref{lem3.5}, we have
$$b_{2k}({P}_{n}^6)=b_{2k}(P_{n-1}^6)+b_{2k-2}(P_{n-2}^6), $$
$$b_{2k}(\mathcal{P}_{n}^4)=b_{2k}(\mathcal{P}_{n-1}^4)+b_{2k-2}(\mathcal{P}_{n-2}^4). $$
 Hence Lemma \ref{lem3.11} follows.
\end{proof}

From the above proofs and Theorem \ref{lem3.1}, then we just need to compare the energy of $C_n^+$ and $\mathcal{P}_{n}^4$ for $n \leq 7$ and
$n = 9, 10, 11, 13, 15.$ By direct calculation,  for $n=4$ we can find  $\mathcal{P}_{4}^4$ ($C_4^{\overline{\sigma}}$)  has   maximal energy,  $E(C_n^+)>E(\mathcal{P}_{n}^4)$ for $n=5,7,$ and $E(C_n^+)<E(\mathcal{P}_{n}^4)$ for $n=6, 9, 10, 11, 13, 15.$
Thus we have
\begin{theorem}\label{lem3.12}
Among all unicyclic signed graphs on $n\geq 4$ vertices, the cycle $C_n^+$ has maximal energy if
$n = 5,7.$ For all other values of $n\geq4$, the unicyclic signed graph with maximal energy is $\mathcal{P}_{n}^4.$
\end{theorem}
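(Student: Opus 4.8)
The plan is to take an arbitrary connected unicyclic signed graph $\Gamma$ of order $n$ and girth $g$, to bound $E(\Gamma)$ by splitting on the residue of $g$ modulo $4$, and then to reduce each branch to the lemmas already established, leaving only a finite numerical check. First I would treat the girths that are odd or $\equiv 2 \pmod 4$. For these, Corollary \ref{cor3.4} gives at once $E(\Gamma) \le E(C_n)$ when $n \le 7$ or $n \in \{9,10,11,13,15\}$, and $E(\Gamma) \le E(P_n^6)$ for all remaining $n$. Since odd cycles are cospectral under all signings by Lemma \ref{lem3.2}, and for even girth $\equiv 2 \pmod 4$ the balanced signing maximizes by Lemma \ref{lem3.3}(ii), the relevant cycle maximizer can be taken to be $C_n^+$; so every such $\Gamma$ is dominated by $C_n^+$ or by $P_n^6$.

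Next I would handle $g \equiv 0 \pmod 4$. If $\Gamma$ is balanced, Lemma \ref{lem3.3}(i) shows it is strictly dominated by an unbalanced signing of the same underlying graph, so it suffices to bound the unbalanced ones. For unbalanced $\Gamma$ with girth $g \equiv 0 \pmod 4$, Lemma \ref{lem3.7} gives $\Gamma \preceq \mathcal{P}_n^g$; then $g=4$ is the target itself, the range $8 \le g < n$ is covered by Lemma \ref{lem3.10}, and the pure cycle $g=n$ (so $n \equiv 0 \pmod 4$) by Lemma \ref{lem3.8}. In every case this yields $\Gamma \preceq \mathcal{P}_n^4$, hence $E(\Gamma) \le E(\mathcal{P}_n^4)$ by Lemma \ref{lem2.3}.

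Combining the two branches, the only global candidates for the maximum are $\mathcal{P}_n^4$ together with the unsigned champions $C_n^+$ and $P_n^6$. Lemma \ref{lem3.11} gives $P_n^6 \prec \mathcal{P}_n^4$ for all $n \ge 6$, so whenever $P_n^6$ is the unsigned champion — that is, for $n \in \{8,12,14\}$ and for $n \ge 16$ — the graph $\mathcal{P}_n^4$ strictly dominates it and is the overall maximizer, as claimed. What remains is the finite set $n \le 7$ and $n \in \{9,10,11,13,15\}$, on which $C_n^+$ is the unsigned champion; here the plan is to compare $E(C_n^+)$ and $E(\mathcal{P}_n^4)$ by direct computation of both characteristic polynomials and energies, obtaining $\mathcal{P}_4^4 = (C_4,\overline{\sigma})$ for $n=4$, the inequality $E(C_n^+) > E(\mathcal{P}_n^4)$ for $n=5,7$, and $E(C_n^+) < E(\mathcal{P}_n^4)$ for $n=6,9,10,11,13,15$.

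I expect this last finite comparison to be the main obstacle. The reason it cannot be folded into the quasi-order argument is that $C_5^+$ and $C_7^+$ are non-bipartite odd cycles and hence lie outside $\Delta_n$ (their spectra are not symmetric about the origin), so Lemma \ref{lem2.3} does not apply to compare them with the bipartite graph $\mathcal{P}_n^4 \in \Delta_n$; one must instead evaluate the Coulson integral, or the eigenvalues directly, for each value separately and read off the sign of $E(C_n^+) - E(\mathcal{P}_n^4)$. This is precisely why the two exceptions $C_5^+$, $C_7^+$ survive. For all $n \ge 16$, by contrast, the argument is clean: the chain $T \prec P_n \prec \mathcal{P}_n^4$ (via Lemma \ref{lem2.4}) together with $P_n^6 \prec \mathcal{P}_n^4$, $\mathcal{P}_n^g \prec \mathcal{P}_n^4$, and $(C_n,\overline{\sigma}) \prec \mathcal{P}_n^4$ leaves $\mathcal{P}_n^4$ as the unique $\preceq$-maximum, which Lemma \ref{lem2.3} converts directly into the strict energy inequality.
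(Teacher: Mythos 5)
Your proposal is correct and follows essentially the same route as the paper: split on the girth modulo $4$, invoke Corollary \ref{cor3.4} for girth odd or $\equiv 2 \pmod 4$, reduce the $g \equiv 0 \pmod 4$ case to $\mathcal{P}_n^4$ via Lemmas \ref{lem3.3}, \ref{lem3.7}, \ref{lem3.8} and \ref{lem3.10}, dispose of $P_n^6$ by Lemma \ref{lem3.11}, and finish with the same finite numerical comparison against $C_n^+$. Your added remark explaining why $C_5^+$ and $C_7^+$ escape the quasi-order argument (they lie outside $\Delta_n$) is a correct and useful clarification of what the paper leaves implicit.
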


\end{document}